\documentclass[11pt]{amsart}
\usepackage{amsfonts,amssymb,amsmath,amsthm}
\usepackage{url}
\usepackage{graphicx} % in the preamble
\usepackage[all]{xy}

\usepackage{enumerate}
\usepackage{shuffle}

\usepackage{amsmath,amsfonts,amsthm,url,color,amssymb}
\usepackage{graphicx}
\usepackage{algpseudocode, algorithm}
\usepackage{hyperref}
\usepackage{todonotes}

\usepackage[norefs,nocites]{refcheck}

\newtheorem{theorem}{Theorem}[section]
\newtheorem{lemma}[theorem]{Lemma}

\newtheorem{proposition}[theorem]{Proposition}
\theoremstyle{definition}
\newtheorem{definition}[theorem]{Definition}

\theoremstyle{remark}
\usepackage{amssymb}
\newtheorem{remark}[theorem]{Remark}
\usepackage{enumerate}

\author{Rafael Bezerra dos Santos}
\address{Departamento de Matem\'{a}tica, Universidade Federal de Minas Gerais, UFMG, Belo Horizonte, MG, 30270-901, Brazil.}
\curraddr{}
\email{rafaelsantos23@mat.ufmg.br}
\thanks{The first author was partially supported by CNPq (Brazil), grant 312058/2025-0, and by FAPEMIG (Brazil), grant RED-00133-21. Corrresponding author.}
\thanks{}

\author{Lucas Reis}
\address{Departamento de Matem\'{a}tica, Universidade Federal de Minas Gerais, UFMG, Belo Horizonte, MG, 30270-901, Brazil.}
\curraddr{}
\email{lucasreismat@mat.ufmg.br}
\thanks{The second author was supported by CNPq (Brazil), grants 310583/2025-0 and 420721/2025-8, and by FAPESP (Brazil), grant 25/21886-4.}

\keywords{superinvolution, polynomial identity, central polynomial, cocharacter, growth}
\subjclass[2010]{16R50, 16W50, 16R10}
\begin{document}

\title[Identities, central polynomials and cocharacters of $(M_{1, 1}(F), trp)$]{Polynomial identities, central polynomials and cocharacters of $M_2(F)$ with transpose superinvolution} 

\begin{abstract} Let $F$ be a field of characteristic zero and consider $M_2(F),$ the algebra of $2\times 2$ matrices over $F,$ with canonical $\mathbb{Z}_2$-grading and endowed with transpose superinvolution. In this paper, we present the generators of the $T_2^*$-ideal of $*$-identities and of the $T_2^*$-subspace of central $*$-polynomials of $M_2(F).$ As a consequence, we determine the sequences of $*$-codimensions and $\langle n\rangle$-cocharacters of $M_2(F)$. In particular, we prove that the $n$-th $*$-codimension grows like $4^nn^{-1/2}$. 
\end{abstract}

\maketitle

\section{Introduction}

Let $F$ be a field of characteristic zero and let $A$ be an associative algebra over $F$. Let $\{x_1,x_2,\ldots\}$ be a countable set of non commutative variables and let $F\langle X\rangle$ be the free algebra on $X$ over $F$. A polynomial $f(x_1,\ldots, x_n)\in F\langle X\rangle$ is a polynomial identity for the algebra $A$ if $f(a_1,\ldots, a_n)=0$, for all $a_1,\ldots ,a_n\in A$. We say that $A$ is a PI-algebra if $A$ satisfies a non trivial polynomial identity. Describing the set $Id(A),$ the $T$-ideal of $A$, of all polynomials identities of a given algebra $A$ is one of the central and most challenging problem in PI-theory. For example, if $M_n(F)$ is the algebra of $n\times n$ matrices over $F$, we do not know a set of generators for $Id(M_n(F))$ if $n\geq 3.$

In order to ``measure" the size of $Id(A)$, Regev \cite{Regev} introduce the sequence of codimensions of an algebra $A$, given by $c_n(A)=\dim_F \dfrac{P_n}{P_n\cap Id(A)},n\geq 1$, where $P_n$ denotes the set of multilinear polynomials of degree $n$. In particular, he proved that if $A$ is a PI-algebra, then such a sequence is exponentially bounded.

Another invariant that can be attached to an algebra $A$ is the sequence of cocharacters of $A.$ It is well known that the symmetric group $S_n$ acts on $P_n$ on the left and, since $P_n\cap Id(A)$ is invariant under this action, then $\dfrac{P_n}{P_n\cap Id(A)}$ inherits a structure of left $S_n$-module. Thus, the $n$-th cocharacter of $A$, denoted by  $\chi_n(A)$, is the $S_n$-character of $\dfrac{P_n}{P_n\cap Id(A)}$.

Let $f\in F\langle X\rangle$. We say that $f$ is a central polynomial for an algebra $A$ if it has no constant term and every evaluation of $f$ at elements from $A$ lies in the center of $A$. The set of all central polynomials of an algebra $A$ is denoted by $Id^z(A)$. It is clear that every polynomial identity is also a central polynomial and, if $f$ is a central polynomial, then the commutator $[f,x]$ belongs to $Id(A)$.

In this paper, we are interested in the study of the algebra $M_2(F)$. In 1981, Drensky \cite{Drensky1}, described the generators of  $Id(M_2(F))$. In the same year, Formanek et.al \cite{Formanek} provided the decomposition of the $n$-th cocharacter of $M_2(F)$. In the context of central polynomials, Okhitin \cite{Okt} determined a generating set for $Id^z(M_2(F))$ in 1988.

The same can be made in the context of algebras with additional structures. An involution $*$ on an algebra $A$ is an antiautomorphism of order at most 2. The concepts of identities, codimensions, cocharacters and central polynomials can be defined analogously to the ordinary case. In this context, Levchenko \cite{Lev} determined the generators of the $T^*$-ideal of $*$-identities of $M_2(F)$ when is endowed with transpose or symplectic involution. The sequence of $*$-cocharacters of $M_2(F)$ was provided by Drensky and Giambruno in \cite{Giam1} and the generators of the space of central 
$*$-polinomials were presented by Brandão and Koshlukov in \cite{Plamen2}. Analogous results were established in the $\mathbb{Z}_2$-graded case \cite{Plamen3} and, more recently, in the more general context of $*$-superalgebras \cite{Ana}.

In this paper we consider the algebra $M_2(F)$ with the canonical $\mathbb Z_2$-grading, endowed with transpose superinvolution: this is the unique nontrivial superinvolution on $M_2(F)$. We show that the $T_2^*$-ideal of $*$-identities of this algebra is essentially generated by $6$ identities: see Theorem~\ref{thm:main}. As a consequence of the latter, we derive explicit formulas for its sequences of $*$-codimensions and $\langle n\rangle$-cocharacters: see Proposition~\ref{prop:cod} and Theorem~\ref{thm:coc}. In particular, Proposition~\ref{prop:cod} reveals that the $n$-th $*$-codimension grows like $4^nn^{-1/2}$. Finally, in Theorem~\ref{thm:main2} we provide a set of generators for the $T_2^*$-subspace of central $*$-polynomials for the algebra: this set of generators contains only one polynomial that is not a $*$-identity.

We end this section with the structure of the paper. In Section 2 we briefly recall definitions and results on algebras with additional structures. In Section 3 we provide basic machinery on the PI-Theory of algebras with superinvolution, and also present two auxiliary lemmas. Finally, in Sections 4 and 5 we state and prove our results.

\section{Algebras with superinvolutions}
	
	Let $F$ be a field of characteristic zero and let $A$ be an associative algebra over $F$. We say that $A$  is a {\em superalgebra} (or a $\mathbb{Z}_2$-graded algebra) if $A$ can be written as a sum of 2 subspaces $A=A^{(0)}+A^{(1)}$ such that $A^{(0)}A^{(0)}+A^{(1)}A^{(1)} \subseteq A^{(0)}$ and $A^{(0)}A^{(1)}+A^{(1)}A^{(0)}\subseteq A^{(1)}$. The elements from $A^{(i)}$ with $i=0,1$ are called homogeneous of degree $i$. We denote by $|a|$ the homogeneous degree of a homogeneous element $a\in A^{(0)}\cup A^{(1)}.$ 
	
	A linear map $*:A\to A$ such that $(A^{(0)})^*=A^{(0)}$ and $(A^{(1)})^*=A^{(1)}$ is called a superinvolution on $A$ if $(a^*)^*=a$ and $(ab)^*=(-1)^{|a||b|}b^*a^*$. A superalgebra endowed with a superinvolution is called a $*$-algebra. Notice that if $(A^{(1)})^2=\{0\}$, then a superinvolution is just a graded involution and, in this case, $A$ is a $*$-superalgebra.    
	
	Since char$(F)=0$ and $(A^{(i)})^*=A^{(i)}$ for
    $i=0,1$, each homogeneous component can be written as $A^{(i)}=(A^{(i)})^+ +(A^{(i)})^-$, where $(A^{(i)})^+=\{a\in A^{(i)}:a^*=a\}$ and $(A^{(i)})^-=\{a\in A^{(i)}:a^*=-a\}$. Therefore, if $A$ is endowed with a superinvolution, then $A$ can be written as $$A=(A^{(0)})^+ +(A^{(0)})^- + (A^{(1)})^+ + (A^{(1)})^-.$$
	
	Let $(A,\psi_1)$ and $(B,\psi_2)$ algebras with superinvolutions $\psi_1$ and $\psi_2$, respectively. We say that $A$ and $B$ are isomorphic, as a $*$-algebras, if there exists an isomorphism of algebras $\phi:A\to B$ such that $\phi(A^{(i)})=B^{(i)},i=0,1,$ and $\phi(a^{\psi_1})=\phi(a)^{\psi_2}$, for all $a\in A$.
	
	An ideal $I$ of an algebra $A$ with superinvolution $*$ is a $*$-ideal if $I=I^{(0)}+I^{(1)}$, where $I^{(i)}=I\cap A^{(i)}, i=0,1,$ and $I^*=I$. We say that an algebra $A$ with superinvolution $*$ is $*$-simple, if $A^{2}\neq \{0\}$ and $A$ has no nontrivial $*$-ideals.
	
	Next, we shall present the classification of simple $*$-algebras over an algebraically closed field of characteristic different from 2. Let $M_n(F)$ be the algebra of $n\times n$ matrices over $F$. If $n=k+l, k\geq l\geq 0,$ then $M_n(F)$ becomes a superalgebra, denoted by $M_{k,l}(F)$, with grading $$(M_{k,l}(F))^{(0)}=\left\{\begin{pmatrix}
		A&0\\0&B
	\end{pmatrix}: A\in M_k(F), B\in M_l(F) \right\}$$ and $$(M_{k,l}(F))^{(1)}=\left\{\begin{pmatrix}
	0&C\\D&0
	\end{pmatrix}: C\in M_{k\times l}(F), D\in M_{l\times k}(F) \right\}.$$
	
	Up to $*$-isomorphism, Racine \cite{Racine} proved that $M_{k,l}(F)$ can be endowed with only 2 superinvolutions:
	
	\begin{itemize}
		\item[1.] the {\em transpose superinvolution}, denoted by $trp$, defined  if $k=l$, and given by $$\begin{pmatrix}
			A&C\\D&B
		\end{pmatrix}^{trp}=\begin{pmatrix}
		B^t&-C^t\\ D^t&A^t
		\end{pmatrix},$$ where $t$ denotes the usual transpose;
		\item[2.] the {\em ortosymplectic superinvolution}, denoted by $osp$, defined if $l=2m$ is even, and given by
		$$\begin{pmatrix}
			A&C\\D&B
		\end{pmatrix}^{osp}=\begin{pmatrix}
		I_k&0\\0&Q
		\end{pmatrix}^{-1}\begin{pmatrix}
		A&-C\\D&B 
		\end{pmatrix}^{t}\begin{pmatrix}
		I_k&0\\0&Q
		\end{pmatrix}=\begin{pmatrix}
		A^t&D^tQ\\ QC^t& -QB^tQ
		\end{pmatrix},$$ where $Q=\begin{pmatrix}
		0&I_m\\-I_m&0
		\end{pmatrix}$ and for each $j\ge 1$, $I_j$ denotes the $j\times j$ identity matrix.
		
	\end{itemize}
	
	Now, if $A$ is a superalgebra, we denote by $A^{sop}$ the algebra with the same structure of superalgebra of $A$ and product given on homogeneous elements $a,b\in A^{sop}$ by $a\circ b=(-1)^{|a||b|}ba$. The algebra $R=A\oplus A^{sop}$ is a superalgebra with induced grading and can be endowed with the exchange superinvolution $(a,b)^*=(b,a)$. For instance, if we consider the superalgebra $Q(n)=M_n(F+cF),c^2=1$, with grading $Q_n^{(0)}=M_n(F)$ and $Q_n^{(1)}=cM_n(F)$, then $Q(n)\oplus Q(n)^{sop}$ is a $*$-algebra with exchange superinvolution. We have the following result.
	
	\begin{theorem}[\cite{Bahturin, Shest, Racine}]
		Let $A$ be a finite dimensional simple $*$-algebra over an algebraically closed field of characteristic different from 2. Then $A$ is isomorphic, as a $\ast$-algebra, to the following:
		\begin{itemize}
			\item[1.] $M_{k,l}(F)$ with the transpose or ortosymplectic superinvolution;
			\item[2.] $M_{k,l}(F)\oplus M_{k,l}(F)^{sop}$, with exchange superinvolution;
			\item[3.] $Q(n)\oplus Q(n)^{sop}$, with exchange superinvolution.
		\end{itemize}
	\end{theorem}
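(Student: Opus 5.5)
We reduce to the classification of finite dimensional simple superalgebras, and then classify the superinvolutions each one can carry. Let $A$ be a finite dimensional $*$-simple algebra over the algebraically closed field $F$, $\mathrm{char}\,F\neq 2$. The first step is a dichotomy: either $A$ is simple as a superalgebra, or it is not.

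\textbf{Step 1 (non-simple case).} Suppose $A$ is not graded simple, and let $I$ be a minimal nonzero graded ideal. Since $*$ preserves the grading and reverses products up to sign, $I^*$ is again a graded ideal, and $I\cap I^*$ and $I+I^*$ are $*$-ideals. By $*$-simplicity $I\cap I^*=0$ (otherwise $I=A$) and $I+I^*=A$, so $A=I\oplus I^*$ with $I I^*=0$. Any graded ideal of $I$ is then a graded ideal of $A$, so $I$ is graded simple. The map $I^*\to I^{sop}$, $a^*\mapsto a$, is a superalgebra isomorphism, because $(ab)^*=(-1)^{|a||b|}b^*a^*$ is exactly the product of $I^{sop}$. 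Consequently $A\cong I\oplus I^{sop}$ with the exchange superinvolution. Wall's classification of finite dimensional simple superalgebras over $F$ gives $I\cong M_{k,l}(F)$ or $I\cong Q(n)$. This yields items 2 and 3.

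\textbf{Step 2 (graded simple case).} If $A$ is graded simple, Wall's theorem again gives $A\cong M_{k,l}(F)$ or $A\cong Q(n)$. The remaining task is to determine all superinvolutions on these algebras up to $*$-isomorphism.

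For $M_{k,l}(F)$, fix the supertranspose $a\mapsto a^{st}$, which is a superinvolution. For any superinvolution $*$, the composite $a\mapsto (a^{*})^{st}$ (with the appropriate inverse) is a graded automorphism. By Skolem--Noether in the graded setting it is conjugation by a homogeneous invertible $\Phi$. Here $\Phi$ is even in general, and it may be odd only when $k=l$. Hence
\[ a^*=\Phi^{-1}a^{st}\Phi. \]
Imposing $(a^*)^*=a$ forces $\Phi^{st}=\pm\Phi$ up to the sign twists of the grading. So $\Phi$ is the Gram matrix of a nondegenerate homogeneous supersymmetric or superskew bilinear form. Changing basis by a graded change of coordinates, which is a $*$-isomorphism, puts $\Phi$ in normal form:
\begin{itemize}
\item[(a)] even $\Phi$ gives an orthosymplectic form, which requires $l$ even and yields $osp$;
\item[(b)] odd $\Phi$ pairs the even and odd parts, which requires $k=l$, and normalizes to $\begin{pmatrix}0&I\\ I&0\end{pmatrix}$ composed with transposition, yielding $trp$.
\end{itemize}

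\textbf{Step 3 (excluding $Q(n)$).} It remains to show that $Q(n)$ carries no superinvolution. The center is $F+Fc$ with $c$ odd and $c^2=1$, so $c^*=\lambda c$. Applying $*$ to $c^2=1$ gives $\lambda^2=-1$. Next, restrict $*$ to the even part $M_n(F)$, where it is an involution, and test the identity $(ca)^*=\pm a^*c^*$ against the fact that $c$ is central, for homogeneous $a$ of both parities. The goal is to derive a contradiction from these compatibility conditions.

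I expect this step to be the main obstacle. The naive sign checks appear consistent, so the contradiction has to come from a finer argument, for instance a trace or supertrace argument on the odd part $cM_n(F)$, or an analysis of how $*$ acts on $Q(n)\otimes Q(n)^{sop}$. If it fails, I would follow Racine's original treatment of this case.

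Combining Steps 1--3 gives the three items of the theorem.
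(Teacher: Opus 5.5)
The paper only quotes this theorem from the cited works, so there is no internal proof to compare with. Your architecture is the standard route: the dichotomy between the graded simple case and $A=I\oplus I^*\cong I\oplus I^{sop}$ with the exchange superinvolution, Wall's theorem, ruling out $Q(n)$, and describing superinvolutions on $M_{k,l}(F)$ by homogeneous forms. Step 1 is correct.

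\textbf{The gap is Step 3.} You leave it open, and your diagnosis there is wrong: the naive sign check already gives the contradiction. Since $ca=ac$ gives $a^*c^*=c^*a^*$ for every $a$, and $*$ is bijective and preserves degrees, $c^*$ lies in the odd part $Fc$ of the center. So $c^*=\lambda c$, and from $1^*=1$ and $(c^2)^*=-(c^*)^2$ you get $\lambda^2=-1$, as you say. But then $c=(c^*)^*=\lambda c^*=\lambda^2 c=-c$, which is impossible since $\operatorname{char}F\neq 2$. Thus $Q(n)$ has no superinvolution, no (super)trace argument is needed, and the graded simple case reduces to $M_{k,l}(F)$.

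\textbf{Step 2 is right in outline but contains an error and an omission.} The supertranspose is not a superinvolution when $k,l\ge 1$: one has $(a^{st})^{st}=(-1)^{|a|}a=gag^{-1}$ with $g=\mathrm{diag}(I_k,-I_l)$, so it is only a super-anti-automorphism. That still lets you write $a^*=\Phi^{-1}a^{st}\Phi$ with $\Phi$ homogeneous. However, $(a^*)^*=MaM^{-1}$ with $M=\Phi^{-1}\Phi^{st}g$, so involutivity means $\Phi^{st}=\lambda\Phi g$, and applying $st$ once more gives $\lambda=\pm 1$. This is the precise content of your ``sign twists''.

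In case (a) both signs occur:
\begin{itemize}
\item $\lambda=1$ means the even block of $\Phi$ is symmetric and the odd block skew, so $l$ is even and you get $osp$.
\item $\lambda=-1$ means the even block is skew and the odd block symmetric, so $k$ is even. For $l=0$ this is the symplectic involution on $M_k(F)$.
\end{itemize}
You drop the second family. It is $osp$ only after the parity swap $M_{k,l}(F)\cong M_{l,k}(F)$, so under the normalization $k\ge l$ it must be read as $osp$ on $M_{l,k}(F)$.

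Likewise, in case (b) both $\begin{pmatrix}0&I\\ I&0\end{pmatrix}$ and $\begin{pmatrix}0&I\\ -I&0\end{pmatrix}$ occur. No even change of basis relates them, because even congruence preserves $\lambda$. They are identified by the graded automorphism $\begin{pmatrix}A&C\\ D&B\end{pmatrix}\mapsto\begin{pmatrix}B&D\\ C&A\end{pmatrix}$, and this is what makes $trp$ unique.
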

	
	We notice that the only nontrivial superinvolution on $M_2(F)$ is the transpose superinvolution, with grading $M_{1,1}(F)$. In this case, if $A=M_{1,1}(F)$, then $(A^{(0)})^+=span_F\{e_{11}+e_{22}\}$, $(A^{(0)})^-=span_F\{e_{11}-e_{22}\}$, $(A^{(1)})^+=span_F\{e_{21}\}$ and $(A^{(1)})^-=span_F\{e_{12}\}$, where $e_{ij}$ denotes the usual matrix unity.
	
	%\textcolor{red}{remover isso, pois estará na introduction} In this paper, our main goal is to describe the $*$-identities, the central $*$-polynomials and the $\langle n\rangle$-cocharacter of $M_{1,1}(F)$ with transpose superinvolution, as we will see below.
	
	\section{The free algebra with superinvolution}
	
	Let $X = \{x_{1}, x_{2}, \ldots \}$ be a countable set of non-commuting variables. We write $X$ as a disjoint union of four subsets $\displaystyle X = Y_{0} \cup Y_{1} \cup Z_{0} \cup Z_{1}$ and we denote by $\mathcal{F} = F \langle X|* \rangle$ the free associative $*$-algebra on $X$ over $F$, where $Y_{0} = \{y_{1,0}, y_{2,0}, \ldots \}$ is the set of symmetric variables of degree $0$, $Y_{1} = \{y_{1,1}, y_{2,1}, \ldots \}$ is the set of symmetric variables of degree $1$, $Z_{0} = \{z_{1,0}, z_{2,0}, \ldots \}$ is the set of skew symmetric variables of degree  $0$ and $Z_{1} = \{z_{1,1}, z_{2,1}, \ldots \}$ is the set of skew symmetric variables of degree  $1$. The elements of $\mathcal{F}$ are called $*$-polynomials. Since $\mathcal{F}$ is a $\ast$-algebra, it can be written as $\mathcal{F}=(\mathcal{F}^{(0)})^+ +(\mathcal{F}^{(0)})^-+(\mathcal{F}^{(1)})^+ +(\mathcal{F}^{(1)})^-$.
	
	We will denote by $[x,y]=xy-yx$ the commutator of the variables $x,y\in X$ and by $x\circ y=xy+yx$ the Jordan product of the variables $x,y\in X$. The following remark is verified by direct computations.

\
    
    \

	\begin{remark}\label{consequences}
		\
		
		\begin{itemize}
			\item[1.] $[y_{1,0},z_{2,0}], [y_{1,1},y_{2,1}],[z_{1,1},z_{2,1}],y_{1,0}\circ y_{2,0},y_{1,1}\circ z_{2,1}\in (\mathcal{F}^{(0)})^+$;
			\item[2.] $[y_{1,0},y_{2,0}], [y_{1,1},z_{2,1}], y_{1,0}\circ z_{2,0}, y_{1,1}\circ y_{2,1},z_{1,1}\circ z_{2,1}\in (\mathcal{F}^{(0)})^-$;
			\item[3.] $[y_{1,0},z_{2,1}], [z_{1,0},y_{2,1}],y_{1,0}\circ y_{2,1}, z_{1,0}\circ z_{2,1}\in (\mathcal{F}^{(1)})^+$;
			\item[4.] $[y_{1,0},y_{2,1}], [z_{1,0},z_{2,1}],y_{1,0}\circ z_{2,1}, z_{1,0}\circ y_{2,1}\in (\mathcal{F}^{(1)})^-$.
		\end{itemize}
	\end{remark}
	
	We say that $$f = f(y_{1,0},  \ldots , y_{m,0}, y_{1,1},  \ldots, y_{n,1}, z_{1,0}, \ldots, z_{p,0}, z_{1,1},  \ldots, z_{q,1}) \in \mathcal{F}$$
	is a $*$-identity of a $*$-algebra $A$ if $$f(a_{1,0}, \ldots, a_{m,0},a_{1,1}, \ldots , a_{n,1}, b_{1,0}, \ldots , b_{p,0}, b_{1,1}, \ldots , b_{q,1}) = 0,$$ 
	for all $a_{1,0}, \ldots, a_{m,0} \in (A^{(0)})^{+}$, $a_{1,1}, \ldots, a_{n,1} \in (A^{(1)})^{+}$, $b_{1,0}, \ldots, b_{p,0} \in (A^{(0)})^{-}$ and $b_{1,1}, \ldots , b_{q,1} \in (A^{(1)})^{-}$. In this case, we write $f\equiv 0$ on $A$. The set of all polynomial $*$-identities satisfied by the algebra $A$ 
	$$
	Id^*(A) = \{f \in \mathcal{F}: f \equiv 0 \mbox{ on } A \}
	$$ 
	is a $T^{*}_{2}$-ideal of $\mathcal{F}$, i.e., an ideal of $\mathcal{F}$ invariant under all graded endomorphisms of $\mathcal{F}$ commuting with the superinvolution $*$.
	
	Given a set of $*$-polynomials $S \subseteq \mathcal{F}$, we denote by $\langle S \rangle_{T_2^*}$ the $T_2^*$-ideal of $\mathcal{F}$ generated by the set $S$. Moreover, we say that a set of $*$-polynomials $S'$ is a consequence of $S$ if $S' \subseteq \langle S \rangle_{T_2^*}$. Since char$(F) =0$, each polynomial of $Id^*(A)$ is
	equivalent to a system of multilinear $*$-identities. We denote by
	$$
	P_{n}^{*} = \textrm{span}\{w_{\sigma(1)} \cdots w_{\sigma(n)}: w_{i} \in \{y_{i,0},y_{i,1},z_{i,0},z_{i,1}\}, \sigma \in S_{n}\},
	$$ 
	the space of multilinear $*$-polynomials of degree $n$ in the variables $y_{1,0},  \ldots , y_{n,0}, \\ y_{1,1}, \ldots, y_{n,1}, z_{1,0}, \ldots, z_{n,0}, z_{1,1},  \ldots, z_{n,1}$. Notice that $\dim_FP_n^*=4^nn!$. The dimension of the quotient space
	$$P_n^{*}(A)=\displaystyle \frac{P_n^{*}}{P_n^{*}\cap Id^*(A)}$$ is called the $n$-th $*$-codimension of $A$ and it is denoted by  $c_n^*(A)$.
	
	\begin{remark}
		If $c_n(A)$ denotes the ordinary codimension of a $*$-algebra $A$, then $0\leq c_n(A)\leq c_n^*(A) \leq 4^nc_n(A)$. As a consequence, by \cite{Regev}, if $A$ satisfies a nontrivial ordinary polynomial identity, then $c_n^*(A)$ is exponentially bounded.
	\end{remark}
	
	In \cite{Ninni}, Ioppolo proved the following relations about the sequence of $*$-codimensions of a $*$-algebra. 
	
	\begin{theorem}
		Let $F$ be a field of characteristic zero and $A$ be a $*$-algebra over $F$. Then, there exist constants $C_1>0, C_2, t_1, t_2$ and an integer $d\geq 0$ such that $$C_1n^{t_1}d^n\leq c_n^*(A)\leq C_2n^{t_2}d^n.$$
	\end{theorem}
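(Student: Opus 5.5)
The plan is to follow the Giambruno--Zaicev scheme for ordinary PI-algebras, adapted to the $\mathbb{Z}_2$-grading together with the superinvolution. Here $A$ is assumed finite dimensional, or at least to satisfy a nontrivial ordinary identity.

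\emph{Reduction to the finite-dimensional case.} I would first pass from a PI $*$-algebra to a finite-dimensional one generating the same $T_2^*$-ideal, modulo a Grassmann envelope. The tool is the $*$-analogue of Kemer's representability theorem.
\begin{itemize}
\item[1.] Since $A^{(0)},A^{(1)}$ and the $\pm$-parts are $*$-stable, $Id^*(A)$ is determined by a finite-dimensional $*$-superalgebra $B$ (possibly a Grassmann envelope of one).
\item[2.] By the Remark above, $c_n^*(A)\le 4^n c_n(A)$, so we already have an exponential upper bound. It remains to pin down the exact base $d$ and the polynomial factors.
\end{itemize}

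\emph{Structure of $B$.} Extend scalars to the algebraic closure; codimensions are unchanged. Then use a Wedderburn--Malcev decomposition compatible with the grading and with $*$, namely $B=B_1\oplus\cdots\oplus B_m+J$. Here $J$ is the Jacobson radical, which is a graded $*$-ideal. Each $B_i$ is a simple $*$-algebra from the classification theorem quoted above: $M_{k,l}$ with $trp$ or $osp$, or an exchange algebra $M_{k,l}\oplus M_{k,l}^{sop}$ or $Q(n)\oplus Q(n)^{sop}$. Define
$$d=\max\dim_F(B_{i_1}\oplus\cdots\oplus B_{i_r}),$$
taken over all tuples of distinct indices with $B_{i_1}JB_{i_2}J\cdots JB_{i_r}\neq 0$.

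\emph{Codimensions via cocharacters.} Decompose $P_n^*(A)$ under $S_{n_1}\times\cdots\times S_{n_4}$ (the four types of variables) into the $\langle n\rangle$-cocharacter
$$\chi_{\langle n\rangle}(A)=\sum m_{\langle\lambda\rangle}\,\chi_{\lambda(1)}\otimes\cdots\otimes\chi_{\lambda(4)}.$$
Then
$$c_n^*(A)=\sum_{n_1+\cdots+n_4=n}\binom{n}{n_1,\dots,n_4}\sum m_{\langle\lambda\rangle}\,d_{\lambda(1)}\cdots d_{\lambda(4)}.$$
Two facts drive the estimate.
\begin{itemize}
\item[1.] The multiplicities $m_{\langle\lambda\rangle}$ are polynomially bounded in $n$. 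This follows from the ordinary result applied to the underlying algebra, by restricting $S_n$-modules to Young subgroups.
\item[2.] Nonzero multiplicities occur only for multipartitions lying in a hook-type region determined by $\dim B^{(0)\pm}$, $\dim B^{(1)\pm}$ and $\dim J$. Alternating enough variables of one type forces the polynomial to vanish.
\end{itemize}

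\emph{Upper bound.} For each multipartition in the admissible region, all but boundedly many boxes of the corresponding diagrams lie in a block of $d$ "rows". Stirling's estimates on the multinomial coefficient and the hook formula then bound the inner sum by $n^{c}d^n$. Summing over the polynomially many admissible $\langle\lambda\rangle$ gives $C_2n^{t_2}d^n$.

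\emph{Lower bound.} Fix an admissible chain $B_{i_1}J\cdots JB_{i_r}\neq0$ realizing $d$. For each $B_{i_j}$, build multialternating $*$-polynomials that are not identities. These are graded $*$-analogues of Regev/Razmyslov central polynomials, with alternations separately on symmetric and skew variables of each degree, so that each block $(B_{i_j}^{(k)})^{\pm}$ receives the right number of alternating variables. Glue them through radical elements to obtain a nonzero value. This yields a multipartition $\langle\lambda\rangle$ with $m_{\langle\lambda\rangle}\neq0$ whose shape is essentially rectangular, with total height equal to $d$ distributed among the four components, and with $n$ only up to a bounded defect. The degree of this module is at least $C_1n^{t_1}d^n$ by the hook formula, which gives the lower bound.

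I expect the main obstacle to be this last construction. The simple components with $osp$ or exchange superinvolution behave differently on odd parts, since signs appear in $(ab)^*=(-1)^{|a||b|}b^*a^*$. One therefore needs explicit central $*$-polynomials, nonvanishing under the sign rules, for each type in the classification. I would first treat $M_{k,l}$ with $trp$ via an explicit matrix-unit evaluation, then reduce the exchange cases to ordinary graded central polynomials on one summand.
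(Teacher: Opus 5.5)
The paper gives no proof of this statement; it is quoted from \cite{Ninni}. There it is proved for \emph{finite-dimensional} $A$ by essentially the Giambruno--Zaicev scheme you outline:
a Wedderburn--Malcev decomposition compatible with the grading and the superinvolution, $d$ defined through nonzero products $B_{i_1}J\cdots JB_{i_r}$, a shape restriction on $\langle\lambda\rangle$ for the upper bound, and glued multialternating polynomials for the lower bound. So your overall route is the standard one. You are also right to add a hypothesis: as printed, the statement is false, since the free $*$-algebra $\mathcal F$ has $c_n^*=4^nn!$. Finite dimensionality is all the paper needs, because the result is only applied to $\mathbb M$.

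Several steps of your sketch still need repair.

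\emph{The PI extension.} Drop it. $*$-stability of the four components has nothing to do with representability. Even a Kemer-type theorem for $T_2^*$-ideals would in general give you a Grassmann envelope of a finite-dimensional algebra carrying a different kind of involution, since passing to the envelope twists the sign rule. Your analysis via Racine's list does not apply to that object.

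\emph{The upper bound.} Item 2, read literally, gives row bounds per type from $\dim(B^{(0)})^{\pm}$ and $\dim(B^{(1)})^{\pm}$, and that only yields base $\dim B$. For $A=B_1\oplus B_2$ one has $d=\max_i\dim B_i<\dim B$. The missing mechanism is as follows.
\begin{enumerate}
\item If $J^s=0$, a nonzero evaluation of a highest weight vector on homogeneous symmetric/skew basis elements of $B$ and $J$ uses fewer than $s$ radical elements.
\item All its semisimple elements lie in a single $B'=B_{i_1}\oplus\cdots\oplus B_{i_r}$ with $B_{i_1}J\cdots JB_{i_r}\neq 0$; repetitions are removed using the units of the $B_i$.
\item Hence each $\lambda(i)$ has fewer than $s$ columns longer than the dimension of the corresponding component of $B'$.
\item These four dimensions add up to $\dim B'\le d$.
\end{enumerate}
It is this \emph{common} $B'$ for all four diagrams that produces your ``block of $d$ rows''.

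\emph{The lower bound count.} The factor $d^n$ comes from $\binom{n}{\langle n\rangle}\prod_i d_{\lambda(i)}$ with $n_i\approx d_in/d$. It does not come from the degree of the $S_{\langle n\rangle}$-module alone.

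\emph{The sign obstacle.} The obstacle you anticipate does not really arise. Evaluating a $*$-polynomial never applies $*$, so the superinvolution enters only through the subspaces $(B_i^{(k)})^{\pm}$.
\begin{itemize}
\item For $trp$ and $osp$, $B_i\cong M_{k+l}(F)$ as an ordinary algebra. Regev's multialternating central polynomials, evaluated on an adapted basis and relabelled by type, suffice.
\item For the exchange types, projecting to the first summand makes symmetric and skew variables independent graded variables. This reduces the construction to products of ordinary multialternating central polynomials of matrix algebras.
\end{itemize}
In both cases no separate sign-by-sign construction is needed.
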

	
	The integer $d\geq 0$ in the previous theorem is called $*$-exponent of $A$ and it is denoted by exp$^*(A)$.
	
	Now, let $n\geq 1$ and consider $n=n_1+n_2+n_3+n_4$, where each $n_i$ is a nonnegative integer. This is a composition of $n$ and we write $\langle n\rangle=(n_1,n_2,n_3,n_4)$. Given a composition $\langle n\rangle=(n_1,n_2,n_3,n_4)$ of $n$, let $P_{\langle n\rangle}\subseteq P_n^*$ be the vector space of multilinear $*$-po\-ly\-no\-mi\-als in which $n_1$ variables are symmetric of homogeneous degree 0, $n_2$ variables are skew of homogeneous degree 0, $n_3$ variables are symmetric of homogeneous degree 1 and $n_4$ variables are skew of homogeneous degree 1. It is clear that there exist $\binom{n}{{\langle n\rangle}}=\frac{n!}{n_1!n_2!n_3!n_4!}$ subspaces of $P_n^*$ that are isomorphic to $P_{\langle n\rangle}$ and, as a consequence, $$ P_n^*\cong \bigoplus_{{\langle n\rangle}}\binom{n}{{\langle n\rangle}}P_{{\langle n\rangle}},$$  where the sum runs over all compositions $\langle n\rangle$ of $n$. 
	
	Therefore, if $P_{\langle n\rangle}(A)=\dfrac{P_{\langle n\rangle}}{P_{\langle n\rangle}\cap Id^*(A)}$ and $c_{\langle n\rangle}(A)=\dim_F(P_{\langle n\rangle}(A))$, then \begin{equation}\label{codim}
		c_n^*(A)=\displaystyle \sum_{\langle n\rangle}\binom{n}{{\langle n\rangle}}c_{\langle n\rangle}(A).
	\end{equation}
	
	Given a composition $\langle n\rangle=(n_1,n_2,n_3,n_4)$ of $n$, we denote by $S_{\langle n \rangle}=S_{n_1}\times \cdots \times S_{n_4}$ the direct product of four symmetric groups $S_{n_i},1\leq i\leq 4$. Given $\langle n \rangle$, the group $S_{\langle n \rangle}$ acts on the left on $P_{\langle n \rangle}$ as follows: $S_{n_1}$ permutes the variables $y_{1,0}, \ldots, y_{n_1,0}$, $S_{n_2}$ permutes the variables $z_{1,0},\ldots, z_{n_2,0}$, and so on. In this way, $P_{\langle n \rangle}$ becomes a left $S_{\langle n \rangle}$-module. Notice that $P_{\langle n\rangle}\cap Id^*(A)$ is invariant under this action and so $P_{\langle n\rangle}(A)$ is a left $S_{\langle n\rangle}$-module. Its $S_{\langle n\rangle}$-character is denoted by $\chi_{\langle n\rangle}(A)$ and is called the $\langle n\rangle$-cocharacter of $A$.

	Given a composition $\langle n\rangle$ of $n$, a multipartition of $\langle n\rangle$, denoted by $\langle \lambda \rangle\vdash \langle n\rangle$, is a sequence of partitions $\langle \lambda\rangle=(\lambda(1),\ldots, \lambda(4))$ such that $\lambda(i)\vdash n_i,i=1,\ldots, 4$. It is well know that there exists a one-to-one correspondence between partitions of $n$ and irreducible $S_n$-characters. Thus, if $\lambda\vdash n$ , we denote by $\chi_{\lambda}$ the corresponding irreducible $S_n$-character.
	Since $F$ is a field of characteristic zero, by complete reducibility, one can write \begin{equation}\label{ncocharacter}
		\chi_{\langle n\rangle}(A)=\displaystyle \sum_{\langle \lambda \rangle\vdash \langle n\rangle} m_{\langle \lambda \rangle}\chi_{\langle \lambda \rangle},
	\end{equation} where $m_{\langle \lambda \rangle}$ is the corresponding multiplicity of the irreducible $S_{\langle n \rangle}$-character $\chi_{\langle \lambda \rangle}=\chi_{\lambda(1)}\otimes \cdots \otimes \chi_{\lambda(4)}$. In order to compute the multiplicities $m_{\langle \lambda \rangle}$ of the
	irreducible characters appearing in (\ref{ncocharacter}) we use the
	re\-presentation theory of the general linear group $GL_n$.

	Let $F_{n,*}=F\langle y_{1,0},\ldots y_{n,0},z_{1,0},\ldots z_{n,0}, y_{1,1},\ldots y_{n,1}, z_{1,0},\ldots z_{n,1}\rangle$ be the free associative $*$-algebra of rank $n$ and consider $U_1=span_F\{{y_{1,0},\ldots y_{n,0}}\}$, $U_2=span_F\{{z_{1,0},\ldots z_{n,0}}\}$, $U_3=span_F\{{y_{1,1},\ldots y_{n,1}}\}$ and $U_4=span_F\{{z_{1,1},\ldots z_{n,1}}\}.$ There is a natural left action of the group $GL(U_1)\times \cdots\times GL(U_4)\cong GL_n^4$ on the space $U_1\oplus \cdots\oplus U_4$ and we can extend this action diagonally to get an action on $F_{n,*}$. Notice that, if $A$ is a $*$-algebra, then $F_{n,*}\cap Id^*(A)$ is invariant under this action.
	
	Considering $F^m_{n,*}$ the space of all multihomogenous $*$-polynomials of degree $m$ in $F_{n,*}$, the quotient space $F^m_{n,*}(A)=\dfrac{F^m_{n,*}}{F^m_{n,*}\cap Id^*(A)}$ is a $GL_n^4$-module and its $GL_n^4$-character is denoted by $\psi_n^{*}(A)$.

	In what follows, the reader may consult \cite{livro} for formal definitions and proofs of the results. There exists a one-to-one correspondence between $GL_n^4$-characters and multipartitions $\langle \lambda\rangle\vdash \langle n\rangle$, where each $\lambda(i)$ has at most $m$ parts. Therefore,\begin{equation}\label{cocharacter}
	\psi_n^*(A)=	\displaystyle \displaystyle{\sum_{ \tiny{\begin{array}{ccc}
						\langle \lambda \rangle \vdash \langle n\rangle\\
						h(\lambda(i))\leq m
		\end{array}}}} \bar{m}_{\langle \lambda \rangle}\psi_{\langle \lambda \rangle},
	\end{equation} where $\bar{m}_{\langle \lambda \rangle}$ is the corresponding multiplicity of the irreducible $GL_n^4$-character $\psi_{\langle \lambda \rangle}$ associated to the multipartition $\langle \lambda \rangle$ and $h(\lambda(i))$ denotes the height of the Young diagram corresponding to $\lambda(i)$ for each $1\le i\le 4$.
	
	\ A Young multitableau $T_{\langle \lambda \rangle}$ of shape $\langle \lambda \rangle$ is a 4-tuple $(T_{\lambda(1)},\ldots, T_{\lambda(4)})$ of Young tableaux. Each multitableau $T_{\langle \lambda \rangle}$ can be associated to a polynomial $f_{
	T_{\langle \lambda \rangle}},$ called {\em highest weight vector}. Moreover, an irreducible submodule of $F^m_{n,*}(A)$ corresponding to the multipartition $\langle \lambda \rangle$ is generated by a non-zero polynomial $f_{\langle \lambda \rangle}$ which can be written as a linear combination of highest weight vectors $f_{
	T_{\langle \lambda \rangle}}.$
	
	\begin{remark}\label{multiplicities}
		
		\
		
		\begin{itemize}
			\item[1.] We have that  $m_{\langle \lambda \rangle}=\bar{m}_{\langle \lambda \rangle}$, for all multipartitions $\langle \lambda \rangle$ such that $h(\lambda(i))\leq m, i=1,\ldots,4$;
			
			\item[2.] For the decomposition in Eq.~\eqref{cocharacter}, we have $\bar{m}_{\langle \lambda \rangle}\neq 0$ if and only if there exists a multitableau $T_{\langle \lambda \rangle}$ such that the corresponding highest weight vector satisfies $f_{T_{\langle \lambda \rangle}}\notin Id^{*}(A)$. Moreover, $\bar{m}_{\langle \lambda \rangle}$ is the maximal
			number of linearly independent highest weight vectors $f_{T_{\langle \lambda \rangle}}$ in $F^m_{n,*}(A)$;
			\item[3.] If $A$ is a $*$-algebra, $A=(A^{(0)})^+ +(A^{(0)})^- + (A^{(1)})^+ + (A^{(1)})^-,$ and $\dim_F((A^{(i)})^{\epsilon})=d_i^{\epsilon}, i\in \{0,1\},\epsilon\in\{+,-\}$, then $\bar{m}_{\langle \lambda \rangle}= 0$ if $h(\lambda(1))>d_0^+, h(\lambda(2))>d_0^-,h(\lambda(3))>d_1^+$ and $h(\lambda(4))>d_1^-.$
		\end{itemize}
		
	\end{remark}
	
	Next, we will deal with central polynomials. Given an algebra with superinvolution $A$, a $*$-polynomial $f\in \mathcal{F}$ is a central $*$-polynomial for $A$ if it has no constant term and any admissible evaluation on $f$ at elements of $A$ belongs to the center of $A$. It is clear that a $*$-polynomial identity of $A$ is a central $\ast$-polynomial of $A$. The set $$Id^{*,z}(A)=\{f\in \mathcal{F}:\mbox{ $f$ is a central $*$-polynomial for $A$}\}$$ is not, in general, a $T_2^*$-ideal of $\mathcal{F}$. However, it is a $T_2^*$-subspace of $\mathcal{F}$, i.e. a subspace invariant under all graded endomorphisms of $\mathcal{F}$ commuting with the superinvolution $*$. If $S\subset \mathcal{F}$, we denote by $\langle S\rangle^{T_2^*}$ the $T_2^*$-subspace generated by $S$. Notice that $\langle S\rangle^{T_2^*}$ is generated, as a subspace, by $f(g_1,\ldots,g_n)$, where $f\in S$ and each $g_i$ has the same homogeneous degree and symmetry
as the variable it replaces.

     \begin{remark}\label{rem:center}
         For $n\geq 1$, the center of $M_n(F)$ is isomorphic to $F$. More precisely, $Z(M_n(F))=\{\alpha I_n:\alpha \in F\},$ where $I_n$ denotes the $n\times n$ identity matrix.
     \end{remark}

	 In the next section, our main goal is to determine the sets $Id^*(A)$ and $Id^{*,z}(A)$, the explicit decomposition of $\chi_{\langle n\rangle}(A)$ and to provide asymptotics for $c_n^{*}(A)$, when $A$ is the algebra $M_{1,1}(F)$ endowed with transpose superinvolution.
	 
	We end this section with some relevant auxiliary lemmas. In the next results, $F$ is a field of arbitrary characteristic.
	
		\begin{lemma}\label{lem:vanish}
		Let $F$ be an infinite field and let $f$ be a polynomial over $F$ in $n$ commuting variables. If $f$ vanishes entirely on $F^n$, then $f$ equals the zero polynomial.
	\end{lemma}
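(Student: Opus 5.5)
We argue by induction on the number $n$ of variables. This is the classical argument; the only property of $F$ we use is that it is infinite, which guarantees that a nonzero one-variable polynomial over $F$ has only finitely many roots in $F$.

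For $n=1$, suppose $f\in F[x_1]$ vanishes at every element of $F$. If $f$ were nonzero, of degree $d\ge 0$, it would have at most $d$ roots in $F$ (since $F$ is a field, hence an integral domain). But $F$ is infinite, so $f$ has infinitely many roots, a contradiction. Hence $f=0$.

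For the inductive step, let $n\ge 2$ and assume the statement holds for $n-1$ variables. We expand $f$ in powers of the last variable,
$$f(x_1,\ldots,x_n)=\sum_{i=0}^{d} g_i(x_1,\ldots,x_{n-1})\,x_n^i,$$
with $g_i\in F[x_1,\ldots,x_{n-1}]$. We fix an arbitrary point $(a_1,\ldots,a_{n-1})\in F^{n-1}$. The one-variable polynomial $h(x_n)=\sum_i g_i(a_1,\ldots,a_{n-1})x_n^i$ vanishes at every $a_n\in F$, so $h=0$ by the case $n=1$. Thus every coefficient $g_i(a_1,\ldots,a_{n-1})$ is $0$. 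Since the point was arbitrary, each $g_i$ vanishes on all of $F^{n-1}$, and the induction hypothesis gives $g_i=0$ for all $i$. Therefore $f=0$.

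The only point needing care is to keep two notions apart: a polynomial being the zero polynomial (all coefficients zero) versus the function it induces being identically zero. The step that uses the hypothesis essentially is the base case, where the infinitude of $F$ rules out a nonzero polynomial with infinitely many roots. The conclusion genuinely fails over finite fields, for instance $x^q-x$ over $\mathbb{F}_q$.
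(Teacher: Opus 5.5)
Your proof is correct: it is the standard induction on the number of variables, using that a nonzero one-variable polynomial over a field has at most as many roots as its degree. The paper states this lemma as a classical fact without proof, so your argument is exactly what it takes for granted.
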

	
	Given $\mathcal{F}=F \langle X|* \rangle$, we denote by $\mathcal{F}^C=F[X|*]$ the free associative and commutative $*$-algebra on $X$ over $F$.
	
	\begin{definition}
		
		Let $f\in \mathcal{F}$. We denote by $f^C\in \mathcal{F}^C$ the polynomial induced by $f$ through the relations $x_ix_j=x_jx_i$ for each $x_k\in\{y_{k,0},z_{k,0},y_{k,1},z_{k,1}\}$ with  $k=i,j$.
	\end{definition}
	For instance, if $f(y_{1,0}, z_{1,0})=y_{1,0}z_{1,0}+z_{1,0}y_{1,0}\in \mathcal{F}$, then $f^C(y_{1,0}, z_{1,0})=2y_{1,0}z_{1,0}\in \mathcal{F}^C$. The polynomial $f^C$ can be zero even if $f\in \mathcal F$ is nonzero: $f(y_{1, 0}, z_{1, 0}, z_{1, 1})=y_{1,0}z_{1,0}z_{1, 1}-z_{1,0}z_{1, 1}y_{1,0}\in \mathcal F$ satisfies $f(y_{1, 0}, z_{1, 0}, z_{1, 1})^C=0\in \mathcal F^C$. The following lemma, crucial in the proofs of our main results, provides a sufficient condition under which the vanishing of $f^C$ implies the vanishing of $f$.
	
\begin{lemma}\label{lem:com}
		Let $F$ be a field and let $f\in \mathcal{F}$ be a polynomial not admitting two monomials $M_1, M_2$ with $(M_1)^C=(M_2)^C$. If $f^C\in \mathcal{F}^C$ is the zero polynomial, the same holds for 
		$f$. In particular, if $F$ is infinite, $f$ contains $n$ distinct variables and $f^C$ vanishes entirely on $F^n$, then $f$ is the zero polynomial.
	\end{lemma}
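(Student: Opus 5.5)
The plan is to work directly with the linear map $\pi:\mathcal{F}\to\mathcal{F}^C$, $f\mapsto f^C$, which sends each monomial of $\mathcal{F}$ to the commutative monomial obtained by forgetting the order of its letters. Write $f=\sum_{i=1}^k \alpha_i M_i$, where $M_1,\ldots,M_k$ are pairwise distinct monomials of $\mathcal{F}$ and every $\alpha_i\neq 0$. By linearity, $f^C=\sum_{i=1}^k\alpha_i (M_i)^C$.

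The hypothesis says the commutative monomials $(M_1)^C,\ldots,(M_k)^C$ are pairwise distinct. Commutative monomials form a basis of $\mathcal{F}^C$, so these are linearly independent. Hence $f^C=0$ forces $\alpha_i=0$ for all $i$, which contradicts the choice of the $\alpha_i$ unless $k=0$. Therefore $f=0$. This is the only real content, and it works over any field.

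For the second statement, suppose $F$ is infinite and $f$ involves $n$ distinct variables. Then $f^C$ is an ordinary commutative polynomial in those $n$ variables. If it vanishes on all of $F^n$, Lemma~\ref{lem:vanish} gives that $f^C$ is the zero polynomial. The first part then yields $f=0$.

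The one point needing care is the word ``vanishes'', since the variables carry degree and symmetry labels. Here one simply regards $f^C$ as a polynomial in $n$ commuting indeterminates and evaluates each at scalars of $F$, with no admissibility condition. I do not expect any genuine obstacle.
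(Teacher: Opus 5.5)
Your proof is correct and is essentially the paper's argument: the paper handles the first statement by ``induction on the number of monomials,'' which amounts to exactly your direct observation that pairwise distinct commutative monomials are linearly independent in $\mathcal{F}^C$, and it deduces the second statement from Lemma~\ref{lem:vanish} just as you do. Reading ``monomial'' as a coefficient-free word is the right interpretation. If coefficients were counted as part of a monomial, the statement would fail: for $f=y_{1,0}z_{1,0}-z_{1,0}y_{1,0}$ the terms $y_{1,0}z_{1,0}$ and $-z_{1,0}y_{1,0}$ have different commutative images, yet $f^C=0$ while $f\neq 0$.
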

	\begin{proof}
		The first statement follows directly by induction on the number of monomials in the expansion of $f$. The second statement follows directly by the first one and Lemma~\ref{lem:vanish}.
	\end{proof}

	\section{$*$-polynomial identities, $*$-codimension and the $\langle n \rangle$-cocharacter}
	For the rest of the paper, $\mathbb M$ will denote the $*$-algebra $M_{1,1}(F)$ endowed with transpose superinvolution. In this section, we will present the generators of the $T_2^*$-ideal $Id^*(\mathbb M)$, compute the $*$-codimension $c_n^*(\mathbb M)$ and exhibit the decomposition of the $\langle n\rangle$-cocharacter of $\mathbb M$. In the proof of our main results we will work with special monomials in $\mathcal F$. To keep the notation concise and avoid long expressions, we introduce the following notation.
    
	\begin{definition}\label{def:basic}

    \
    
		\begin{enumerate}
		    \item For subsets $S, T\subseteq \{1, \ldots, n\}$, we set $$(YZ)_{S, T}=y_{s_1, 0}\cdots y_{s_j, 0}z_{t_1, 0}\cdots z_{t_k, 0},$$ 
		where $(s_i)_{1\le i\le j}$ and $(t_i)_{1\le i\le k}$ denote the increasing sequence of the elements in $S$ and $T$, respectively. 

        \item For $U, V\subseteq \{1, \ldots, n\}$ with $|U|=|V|=j>0$, set 
		$$(Y|Z)_{U, V}=y_{u_1, 1}z_{v_1, 1}\cdots y_{u_j, 1}z_{v_j, 1},$$
		where $(u_i)_{1\le i\le j}$ and $(v_i)_{1\le i\le j}$ denote the increasing sequence of the elements in $U$ and $V$, respectively. We define $(Z|Y)_{U, V}$ in a similar way.

        \item For $U, V\subseteq \{1, \ldots, n\}$ with $|U|=j+1$ and $|V|=j\ge 0$, set 
		$$\{Y|Z\}_{U, V}=y_{u_1, 1}z_{v_1, 1}\cdots y_{u_j, 1}z_{v_j, 1}y_{u_{j+1}},$$
		where $(u_i)_{1\le i\le j+1}$ and $(v_i)_{1\le i\le j}$ denote the increasing sequence of the elements in $U$ and $V$, respectively. We define $\{Z|Y\}_{U, V}$ in a similar way when $|V|=j+1$ and $|U|=j\ge 0$.
		\end{enumerate}
	\end{definition}
	
	We recall that $(\mathbb M^{(0)})^+=span_F\{e_{11}+e_{22}\}$, $(\mathbb M^{(0)})^-=span_F\{e_{11}-e_{22}\}$, $(\mathbb M^{(1)})^+=span_F\{e_{21}\}$ and $(\mathbb M^{(1)})^-=span_F\{e_{12}\}$. In what follows, we provide a set of generators for $Id^*(\mathbb M)$.

	\begin{theorem}\label{thm:main}
		Let $F$ be a field of characteristic zero. Then the $T_2^*$-ideal $I=Id^*(\mathbb M)$ is generated by the following polynomials:
		\begin{enumerate}
			\item $[y_{1, 0}, x]$ with $x\in \{y_{2,0},
			y_{1, 1}, z_{1, 0}, z_{1, 1}\}$;
			\item $y_{1, 1}y_{2, 1}$;
			\item $z_{1, 1}z_{2, 1}$;
			\item $[z_{1, 0}, z_{2, 0}]$;
			\item $z_{1, 0}\circ z_{1, 1}$;
			\item $z_{1, 0}\circ y_{1, 1}$.
			% \item $y_{1, 1}z_{1, 1}y_{2, 1}=y_{2, 1}z_{1, 1}y_{1, 1}$.
		\end{enumerate}
	\end{theorem}
	
	\begin{proof}
		Let $J$ be the $T_2^*$-ideal generated by the above polynomials. It is direct to verify that $J\subseteq I$. Now let $f\in I$. Since $F$ has characteristic $0$, we can assume that $f$ is multilinear of degree $n\ge 1$. From the identities in (1), (4), (5) and (6), we see that $f\equiv P\pmod J$, where $P$ is a multilinear polynomial of degree $n$ with the following property: 
		each monomial of $P$ is of the form $\alpha X_0X_1$, where
		$X_1$ is a monomial of degree $k\ge 0$ in the variables of homogeneous degree one and
		$X_0=(YZ)_{S, T}$ for some $S, T\subseteq \{1, \ldots, n\}$ with $|S|+|T|=n-k$. Moreover, from the identities (2) and (3), we can also assume that the monomial $X_1$ alternates between the two types of variables of homogeneous degree one. Since 
		$$y_{1, 1}\circ [z_{1, 1}, y_{2, 1}]=y_{1, 1}z_{1, 1}y_{2, 1}-y_{1, 1}y_{2, 1}z_{1, 1}+z_{1, 1}y_{2, 1}y_{1, 1}-y_{2, 1}z_{1, 1}y_{1, 1},$$
		identities (2) and (3) imply 
		$$y_{1, 1}\circ [z_{1, 1}, y_{2, 1}]\equiv y_{1, 1}z_{1, 1}y_{2, 1}-y_{2, 1}z_{1, 1}y_{1, 1}\pmod J.$$
		Now, by Remark \ref{consequences}, we have that $ [z_{1, 1}, y_{2, 1}]\in (\mathcal{F}^{(0)})^-$. Hence identity (6) entails that $y_{1, 1}\circ [z_{1, 1}, y_{2, 1}]\in J$. In conclusion,  
		$$y_{1, 1}z_{1, 1}y_{2, 1}\equiv y_{2, 1}z_{1, 1}y_{1, 1}\pmod J.$$
		In a similar way we obtain $z_{1, 1}y_{1, 1}z_{2, 1}\equiv z_{2, 1}y_{1, 1}z_{1, 1}\pmod J.$
		From these two consequences we can also assume that, among the variables of the same type in $X_1$, their indices are ordered in increasing order. 
		
		With Definition~\ref{def:basic} in mind, these observations lead to the following: the polynomial $P$ is of the form $P_0+P_1+P_2+P_3+P_4$, where $P_0$ only contains monomials of the form $\alpha_0\cdot (YZ)_{S, T}$, $P_1$ only contains monomials of the form $\alpha_1\cdot (YZ)_{S, T}\cdot (Y|Z)_{U, V}$, $P_2$ only contains monomials of the form $\alpha_2\cdot (YZ)_{S, T}\cdot (Z|Y)_{U, V}$, $P_3$ only contains monomials of the form $\alpha_3\cdot (YZ)_{S, T}\cdot \{Y|Z\}_{U, V}$ and $P_4$ only contains monomials of the form $\alpha_4\cdot (YZ)_{S, T}\cdot  \{Z|Y\}_{V, U},\alpha_i\in F$. We further write $P_i=P_{i, +}+P_{i, -}$, where $P_{i, +}$ and $P_{i, -}$ are the sums of the monomials of $P_i$ containing an even and odd number of variables $z_{i, 0}$, respectively. We want to prove that each $P_{i, \pm}$ vanishes. Our idea is to take suitable evaluations at the variables and employ Lemma~\ref{lem:com}. From construction, it is clear that the polynomials $P_{i, \pm}$ and $P_{i, +}\pm P_{i, -}$ do not contain two monomials $M_1, M_2$ with $M_1^C=M_2^C$. 
		
		From hypothesis, $P$ is an identity. Take $y_{i, 1}= z_{i, 1}=0$, $y_{i, 0}=a_i(e_{11}+e_{22})$ and $z_{i, 0}=b_i (e_{11}-e_{22})$ with $a_i, b_i\in F$. We obtain
		$$(Q_{0, +}+Q_{0, -})e_{1 1}+(Q_{0, +}-Q_{0, -})e_{2 2}=0,$$
		where $Q_{0, \pm}\in F$ is the image of $P_{0, \pm}^C$ at the elements $a_i, b_i$. Since $e_{11}$ and $e_{22}$ are linearly independent, we obtain $Q_{0, +}+Q_{0, -}=Q_{0, +}-Q_{0, -}=0$ and so $Q_{0, +}=Q_{0, -}=0$. From Lemma~\ref{lem:com}, both $P_{0, +}$ and $P_{0, -}$ are zero. In conclusion, $P_0$ equals zero.
		
		Now take $y_{i, 0}=a_i(e_{11}+e_{22}), z_{i, 0}=b_i(e_{11}-e_{22}), y_{i, 1}=c_i e_{21}$ and $z_{i, 1}=d_i e_{12}$ for each $1\le i\le n$.
		Similarly to the previous computation, we obtain 
		$$(Q_{1, +}-Q_{1, -})e_{22}+(Q_{2, +}+Q_{2, -})e_{11}+(Q_{3, +}-Q_{3, -})e_{21}+(Q_{4, +}+Q_{4, -}) e_{12}=0,$$
		where $Q_{k, \pm}\in F$ is the image of $P_{k, \pm}^C$ at the elements $a_i, b_i, c_i, d_i$. 
		Again, as the elements $e_{ij}$ are linearly independent, we obtain $Q_{k, +}+(-1)^kQ_{k, -}=0$ for each $1\le k\le 4$. From Lemma~\ref{lem:com}, we obtain $P_{k, +}+(-1)^kP_{k, -}=0$. From construction, the monomials in $P_{k, +}$ cannot cancel with any monomial in  $P_{k, -}$ so the last equality implies that $P_{k, +}=P_{k, -}=0$. In conclusion, each $P_k$ equals zero and so $P$ equals zero. Since $f\equiv P\pmod J$, we obtain $f\in J$ and then $I\subseteq J$, concluding the proof.
	\end{proof}

	\begin{remark}\label{rem:homo}
		In the proof of Theorem~\ref{thm:main}, we assumed that $P$ is multilinear of degree $n$ just to simplify the notations and computations. Indeed, identities (2) and (3) imply $y_{1, 1}^2, z_{1, 1}^2\in J$. Therefore, following the proof of Theorem~\ref{thm:main}, we see that if $P$ is not multilinear, we only need to replace the products
		$(YZ)_{S, T}$ by more generic products 
		$$y_{s_1, 0}^{e_1}\cdots y_{s_j, 0}^{e_j}\cdot z_{t_1, 0}^{\varepsilon_1}\cdots z_{t_k, 0}^{\varepsilon_k},$$
		where $e_j, \varepsilon_k\ge 1$ and allow $U, V\subseteq\{1, \ldots, n\}$ to be multisets in the products $(Y|Z)_{U, V}, (Z|Y)_{V, U}, \{Y, Z\}_{U, V}$ and $\{Z, Y\}_{V, U}$. The proof follows similarly by writing $P_i=P_{i, +}+P_{i, -}$, where $P_{i, +}$ and $P_{i, -}$ only contain monomials with the sum $\varepsilon_1+\cdots+\varepsilon_k$ being even and odd, respectively. Lemma~\ref{lem:com} is still applicable for $R\in \{P_{i, \pm}, P_{i, +}\pm P_{i, -}\}$ in this case:  the only difference is that $R^C$ is not necessarily multilinear.  
    Thus Theorem~\ref{thm:main} also holds for any infinite field $F$ of characteristic $\ne 2$. 
	\end{remark}
	
	As an application of the previous theorem, we obtain the following result.

	\begin{proposition}\label{prop:cod}
		For each $n\ge 1$, we have 
		\begin{equation}\label{eq:binom}
			c_n^*(\mathbb M)=2^{n}\left(2\sum_{j=0}^{n}\binom{n}{j}\binom{j}{\lfloor\frac{j}{2}\rfloor}2^{-j}-1\right).
		\end{equation}
		In particular, $c_n^*(\mathbb M)\approx 4^nn^{-1/2}$ and $\mathrm{exp}^*(\mathbb M)=4$.
	\end{proposition}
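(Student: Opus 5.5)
The plan is to read off an explicit basis of $P_n^*(\mathbb M)$ from the proof of Theorem~\ref{thm:main} and then count it. That proof shows two things.

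First, every multilinear $*$-polynomial of degree $n$ is congruent modulo $Id^*(\mathbb M)$ to a linear combination of monomials $(YZ)_{S,T}\cdot X_1$. Here $X_1$ is either empty or one of $(Y|Z)_{U,V}$, $(Z|Y)_{U,V}$, $\{Y|Z\}_{U,V}$, $\{Z|Y\}_{U,V}$.

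Second, a linear combination of such monomials that is a $*$-identity must be zero. This is exactly the conclusion $P=0$ obtained there via Lemma~\ref{lem:com}.

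Hence these monomials, taken over all admissible choices of $S,T,U,V$ partitioning $\{1,\ldots,n\}$, form a basis of $P_n^*(\mathbb M)$. It remains to count them. I do this for the multilinear space in $n$ indices, where each index $i$ is assigned one of the four variable types.

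For the count, fix $j$, the number of indices carrying a variable of homogeneous degree one. There are $\binom{n}{j}$ choices of these indices. Each of the remaining $n-j$ indices is a $y$ or a $z$ of degree zero, giving $2^{n-j}$ choices, and the word $(YZ)_{S,T}$ is then determined. For the odd part:

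\begin{itemize}
\item[(i)] If $j=0$, there is exactly one monomial (the empty word).
\item[(ii)] If $j=2m>0$, we need $|U|=|V|=m$. There are $\binom{j}{m}$ ways to choose $U$, and two words, $(Y|Z)$ and $(Z|Y)$, giving $2\binom{j}{m}$.
\item[(iii)] If $j=2m+1$, either $|U|=m+1$ with word $\{Y|Z\}$, or $|V|=m+1$ with word $\{Z|Y\}$. This gives $\binom{j}{m}+\binom{j}{m+1}=2\binom{j}{\lfloor j/2\rfloor}$.
\end{itemize}

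Thus for every $j\ge 1$ the count is $2\binom{j}{\lfloor j/2\rfloor}$, while for $j=0$ it is $1$, i.e.\ the uniform formula minus one. Summing,
$$c_n^*(\mathbb M)=\sum_{j=0}^n\binom{n}{j}2^{n-j}\cdot 2\binom{j}{\lfloor j/2\rfloor}-2^n,$$
which is \eqref{eq:binom}. Equivalently, one can use \eqref{codim} with $c_{\langle n\rangle}(\mathbb M)$ computed for each composition, but the direct count is simpler.

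For the asymptotics, I would use the central binomial estimate $\binom{j}{\lfloor j/2\rfloor}2^{-j}\sim\sqrt{2/(\pi j)}$. The weights $\binom{n}{j}$ concentrate at $j=n/2+O(\sqrt n)$, and summing them gives $2^n$. Therefore
$$\sum_j\binom{n}{j}\binom{j}{\lfloor j/2\rfloor}2^{-j}\sim 2^n\sqrt{4/(\pi n)}.$$
To make this rigorous, I split the sum into $|j-n/2|\le n^{2/3}$ and its complement. On the central part the estimate is uniform. The complement contributes $O(2^n e^{-cn^{1/3}})$ by Chernoff, since each factor $\binom{j}{\lfloor j/2\rfloor}2^{-j}$ is at most $1$.

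This yields $c_n^*(\mathbb M)\sim \tfrac{4}{\sqrt\pi}\,4^n n^{-1/2}$, and in particular $\exp^*(\mathbb M)=4$. The main point needing care is the basis claim, specifically the linear independence of the listed monomials modulo $Id^*(\mathbb M)$. Since this is exactly what the proof of Theorem~\ref{thm:main} establishes, I would simply cite it.
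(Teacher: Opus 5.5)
Your proof is correct, and the counting part is the paper's count organized differently. The paper goes through Eq.~\eqref{codim}: it computes $c_{\langle n\rangle}(\mathbb M)\in\{0,1,2\}$ for each composition according to $n_3-n_4$, then regroups the multinomial sums into $S_1$ (from $n_3=n_4$) and $S_2$ (from $|n_3-n_4|=1$). You instead group by $j=n_3+n_4$ from the outset and use $\binom{2m+1}{m}=\binom{2m+1}{m+1}$. This lands directly on \eqref{eq:binom}, with the $-2^n$ correction coming only from $j=0$. Both rest on the same basis extracted from the proof of Theorem~\ref{thm:main}.

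The genuine difference is in the asymptotics. The paper only proves that $c_n^*(\mathbb M)$ is of order $4^nn^{-1/2}$:
\begin{itemize}
\item it replaces $\binom{j}{\lfloor j/2\rfloor}2^{-j}$ by $j^{-1/2}$ up to constants;
\item it uses the trivial lower bound $\sum_{j\ge 1}\binom{n}{j}j^{-1/2}\ge n^{-1/2}(2^n-1)$;
\item for the upper bound, it discards the range $j<n/3$ via the crude estimate $\binom{n}{j}<c\cdot 1.9^n$.
\end{itemize}
Your window $|j-n/2|\le n^{2/3}$, together with a Chernoff bound on the tail (using $\binom{j}{\lfloor j/2\rfloor}2^{-j}\le 1$), gives the sharper $c_n^*(\mathbb M)\sim\frac{4}{\sqrt\pi}\,4^nn^{-1/2}$ with the exact constant. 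The price is the uniformity check on the central window: that $j\to\infty$ and $n/(2j)\to 1$ uniformly there, which you handle correctly. The paper's argument is more elementary and suffices for what is actually claimed, namely the order of growth and $\exp^*(\mathbb M)=4$. Your route strengthens $\approx$ to a true asymptotic equivalence at little extra cost.
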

	
	\begin{proof}
		
		By Eq.~\eqref{codim}, we have  $$c_n^*(\mathbb M)=\displaystyle \sum_{\langle n\rangle}\binom{n}{{\langle n\rangle}}c_{\langle n\rangle}(\mathbb M).$$
		%For each composition $\langle n\rangle$ of $n$ as $n=n_1+n_2+n_3+n_4$ with $n_i\ge 0$, 
		%let $P_{\langle n\rangle}$ be the space of all multilinear polynomials of degree $n$ in the $n$ variables $y_{i, 0}, z_{j, 0}, y_{k, 1}$ and $z_{l, 1}$ with $i\le n_1, j\le n_2, k\le n_3$ and $l\le n_4$. It is clear that \begin{equation}\label{eq:cod}
		%	c_n(M_{1, 1})=\sum_{\langle n\rangle}\binom{n}{\langle n\rangle}\dim \frac{P_{\langle n\rangle}}{P_{\langle n\rangle}(M_{1, 1})}.   
		%\end{equation}
		From the proof of Theorem~\ref{thm:main}, we know that $P_{\langle n\rangle}(\mathbb M)$ is the null space unless $|n_3-n_4|\le 1$. 
		
		If $n_3=n_4=k\ge 0$, we follow the proof of Theorem~\ref{thm:main} and the notations there and conclude that the quotient space $P_{\langle n\rangle}(\mathbb M)$ is generated by the following linearly independent monomials:
		$$y_{1, 0}\cdots y_{n_1, 0}z_{1, 0}\cdots z_{n_2, 0}\cdot y_{1, 1}z_{1, 1}\cdots y_{k, 1}z_{k, 1},$$
        and
        $$y_{1, 0}\cdots y_{n_1, 0}z_{1, 0}\cdots z_{n_2, 0}\cdot z_{1, 1}y_{1, 1}\cdots z_{k, 1}y_{k, 1}.$$ In particular, the corresponding space has dimension $2$ if $k>0$ and dimension $1$ if $k=0$. Therefore, from Eq.~\eqref{codim}, the case $n_3=n_4$ contributes with
		\begin{align*}
			S_1 &=\sum_{n_1+n_2=n}\frac{n!}{n_1!n_2!}+2\sum_{0< k\le n/2}\sum_{n_1+n_2=n-2k}\frac{n!}{n_1!n_2!(k!)^2}\\&=\sum_{j=0}^n\binom{n}{j}+2\sum_{0< k\le n/2}\frac{n!}{(n-2k)!(k!)^2}\sum_{j=0}^{n-2k} \binom{n-2k}{j}.    
		\end{align*}
		From the identity $\sum_{j=0}^{n-2k} \binom{n-2k}{j}=2^{n-2k}$, we obtain
		$$S_1=2^n+2\sum_{0< k\le n/2}\binom{n}{2k}\binom{2k}{k}2^{n-2k}=-2^n+2\sum_{0\le k\le n/2}\binom{n}{2k}\binom{2k}{k}2^{n-2k}.$$
		Similarly, for the two cases $|n_3-n_4|= 1$ the corresponding quotient spaces have dimension $1$ and, combined, they contribute with 
		$$S_2=2\sum_{0\le k<n/2}\binom{n}{2k+1}\binom{2k+1}{k}2^{n-2k-1}.$$
		Since $c_n^*(\mathbb M)=S_1+S_2$, we directly derive Eq.~\eqref{eq:binom}. We now proceed to the asymptotic growth of $c_n^*(\mathbb M)$. Stirling's formula yields the asymptotic formula $\binom{2k}{k}\sim \frac{4^k}{\sqrt{\pi k}}$, so it suffices to prove that
		$$\Delta_n:=\sum_{j=1}^n\binom{n}{j}j^{-1/2}\approx  2^nn^{-1/2}.$$
		We clearly have the lower bound $\Delta_n\ge n^{-1/2}\sum_{j=1}^n\tbinom{n}{j}=n^{-1/2}(2^n-1)$. For the upper bound, it is direct to verify that
		Stirling's formula yields $\binom{n}{j}<\binom{n}{\lfloor n/3\rfloor}<c\cdot 1.9^n$ if $j<n/3$ and $n$ is large enough, where $c$ is an absolute constant. Hence $\sum_{j<n/3}\binom{n}{j}j^{-1/2}<cn\cdot 1.9^n$ and so
		
		\begin{align*}
		\Delta_n &<cn \cdot 1.9^n+ \sum_{n/3\le j\le n}\binom{n}{j}j^{-1/2}\\ {} &\le  cn\cdot1.9^n+\left(\frac{n}{3}\right)^{-1/2}\sum_{n/3\le j\le n}\binom{n}{j}
        \\ {} &< Cn^{-1/2}2^n,\end{align*}
		for some absolute constant $C$. 
	\end{proof}

    Next, we will deal with the decomposition (\ref{ncocharacter}) of $\chi_{\langle n\rangle}(\mathbb M)$. Remark \ref{multiplicities} implies that, for every composition $\langle n \rangle=(n_1,n_2,n_3,n_4)$ of $n$ and every multipartition $\langle \lambda\rangle\vdash \langle n \rangle$ having nonzero multiplicity, the corresponding Young multitableaux $T_{\langle \lambda\rangle}$ satisfies the following property: all the tableaux in $T_{\langle \lambda\rangle}$ have at most 1 row. It is clear that the $n_1$ symmetric variables of degree 0 and the $n_2$ skew variable of degree 0 do not affect the highest weight vector associated to $T_{\langle \lambda\rangle}$, modulo $Id^*(\mathbb M).$ Thus, the multiplicities $m_{\langle \lambda\rangle}$ depend only on the values of $n_3$ and $n_4.$ 
    
    From the proof of Proposition \ref{prop:cod}, we obtain the following. If $n_3=n_4>0,$ then there are two highest weight vectors that are linearly independent modulo $Id^*(\mathbb M)$, but any three of such vectors are linearly dependent modulo $Id^*(\mathbb M)$. Consequently, $m_{\langle \lambda\rangle}=2$ in this case. If $|n_3-n_4|=1$ or $n_3=n_4=0$ then, modulo $Id^*(\mathbb M)$, there is (up to a scalar) a unique highest weight vector that is not a $\ast$-identity, and hence $m_{\langle \lambda \rangle}=1$. In all other cases, $m_{\langle \lambda \rangle}=0$. This yields the following theorem.

    \begin{theorem}\label{thm:coc}
        Let $F$ be a field of characteristic zero and consider the decomposition $\chi_{\langle n\rangle}(\mathbb M)=\displaystyle \sum_{\langle \lambda \rangle\vdash \langle n\rangle} m_{\langle \lambda \rangle}\chi_{\langle \lambda \rangle}$ of the $\langle n\rangle$-cocharacter of $\mathbb M$. Then, we have  $$ m_{\langle \lambda \rangle}=\begin{cases}
         2, &\, \text{if}\;\,  n_3=n_4>0;\\
         1, & \, \text{if}\;\, |n_3-n_4|=1\;\, \text{or}\;\, n_3=n_4=0;\\
         0, & \, \text{if}\;\,  |n_3-n_4|>1.
        \end{cases}$$
    \end{theorem}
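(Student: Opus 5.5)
The plan is to compute $\chi_{\langle n\rangle}(\mathbb M)$ directly from the explicit basis of $P_{\langle n\rangle}(\mathbb M)$ given in the proof of Proposition~\ref{prop:cod}. We do not go through highest weight vectors. The multiplicities are then read off by comparing dimensions with the $S_{\langle n\rangle}$-action.

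Fix $\langle n\rangle=(n_1,n_2,n_3,n_4)$ and consider the cases in turn.

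\textbf{Case $|n_3-n_4|>1$.} By Theorem~\ref{thm:main}, every monomial in $P_{\langle n\rangle}$ lies in $J$. Indeed, a monomial in which degree-one variables must appear consecutively twice with the same type is killed by identities (2) and (3) after the rearrangement in that proof. Hence $P_{\langle n\rangle}(\mathbb M)=0$ and all $m_{\langle\lambda\rangle}=0$.

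\textbf{Case $|n_3-n_4|\le 1$.} The proof of Theorem~\ref{thm:main} shows that every monomial of $P_{\langle n\rangle}$ is congruent modulo $Id^*(\mathbb M)$ to one of the normal words. The key additional observation is that this congruence holds \emph{with coefficient $1$}.
\begin{itemize}
\item Identity (1) reorders $y_{i,0}$ freely.
\item Identity (4) reorders the $z_{i,0}$ among themselves.
\item Identities (5) and (6) move $z_{j,0}$ past a degree-one variable at the cost of a sign.
\item The congruences $y_{1,1}z_{1,1}y_{2,1}\equiv y_{2,1}z_{1,1}y_{1,1}$ and its analogue reorder indices of the same type.
\end{itemize}
So every monomial is $\pm$ a normal word. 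Now let $\sigma\in S_{\langle n\rangle}$ act on a normal word $w$. The result $\sigma w$ is a monomial of the same shape with permuted indices, and it reduces back to $\pm w$.

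To determine the sign, note the following. Permuting the $y_{\cdot,0}$, the $y_{\cdot,1}$ or the $z_{\cdot,1}$ involves only identity (1) and the symmetric congruences, so it produces sign $+1$. Permuting the $z_{\cdot,0}$ uses $[z_{1,0},z_{2,0}]$, which also gives $+1$. The signs coming from (5) and (6) depend only on how many degree-one letters each $z_{j,0}$ crosses. That number is invariant under relabelling, since after the action the $z_{\cdot,0}$ remain in the prefix. Hence $S_{\langle n\rangle}$ acts trivially on each normal word modulo $Id^*(\mathbb M)$.

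\textbf{Reading off the multiplicities.} In the case $|n_3-n_4|\le1$, $P_{\langle n\rangle}(\mathbb M)$ is therefore a direct sum of $d$ copies of the trivial module $\chi_{(n_1)}\otimes\chi_{(n_2)}\otimes\chi_{(n_3)}\otimes\chi_{(n_4)}$. Here $d$ is the dimension computed in Proposition~\ref{prop:cod}:
\begin{itemize}
\item $d=2$ if $n_3=n_4>0$, since $(Y|Z)$ and $(Z|Y)$ are both available;
\item $d=1$ if $n_3=n_4=0$ or $|n_3-n_4|=1$.
\end{itemize}
Thus $m_{\langle\lambda\rangle}=d$ for the one-row multipartition and $0$ otherwise. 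This is consistent with Remark~\ref{multiplicities}(3), since each graded-symmetric component is one-dimensional.

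The statement's multiplicity should be read as the multiplicity of the unique one-row multipartition $\langle\lambda\rangle=((n_1),(n_2),(n_3),(n_4))$. The main obstacle is the sign bookkeeping in the triviality argument. I would verify it on the basis elements by evaluating with the generic substitution used in the proof of Theorem~\ref{thm:main}. The value $Q$ of a relabelled normal word is a commutative polynomial in the $a_i,b_i,c_i,d_i$ that is symmetric under relabelling. So $\sigma w$ and $w$ agree under the evaluation, and therefore $\sigma w-w\in Id^*(\mathbb M)$ by Lemma~\ref{lem:com}. This sidesteps tracking signs through the reductions.
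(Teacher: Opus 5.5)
Your argument is correct, but it takes a different route from the paper.

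\textbf{The paper's route.} The paper works on the $GL_n^4$ side. It uses Remark~\ref{multiplicities}(3) to restrict to one-row multitableaux and identifies $m_{\langle\lambda\rangle}$ with $\bar m_{\langle\lambda\rangle}$. It then counts highest weight vectors that are linearly independent modulo $Id^*(\mathbb M)$. For the claim that there are exactly two (resp.\ one) such vectors, it points back to the proof of Proposition~\ref{prop:cod}.

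\textbf{Your route.} You stay on the $S_{\langle n\rangle}$ side. You show that $S_{\langle n\rangle}$ acts trivially on $P_{\langle n\rangle}(\mathbb M)$, so that $\chi_{\langle n\rangle}(\mathbb M)=c_{\langle n\rangle}(\mathbb M)\,\chi_{(n_1)}\otimes\chi_{(n_2)}\otimes\chi_{(n_3)}\otimes\chi_{(n_4)}$. The multiplicities are then read off from the dimension count in Proposition~\ref{prop:cod}.

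\textbf{Comparison.} Your approach is elementary and self-contained: it needs neither highest weight vectors nor Remark~\ref{multiplicities}. The restriction to one-row multipartitions comes out as a consequence rather than an input. The paper's informal ``any three are linearly dependent'' step becomes an exact dimension statement. The paper's route, in exchange, fits the standard framework used for other cocharacter computations. Your reading of the statement, with the multiplicity referring to $((n_1),(n_2),(n_3),(n_4))$ and all other multipartitions having multiplicity zero, is the right one.

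\textbf{Two minor slips.}
\begin{enumerate}
\item Your closing evaluation argument is the cleanest proof of triviality and makes the sign bookkeeping unnecessary. It works for every monomial $M$, not only normal words. The value of $M$ under $y_{i,0}=a_i(e_{11}+e_{22})$, $z_{i,0}=b_i(e_{11}-e_{22})$, $y_{i,1}=c_ie_{21}$, $z_{i,1}=d_ie_{12}$ is $\prod a_i\prod b_i\prod c_i\prod d_i$ times a matrix depending only on the sequence of variable types, and $\sigma$ does not change that sequence.

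However, the conclusion $\sigma w-w\in Id^*(\mathbb M)$ does not come from Lemma~\ref{lem:com}. That lemma concludes that a polynomial is zero in $\mathcal F$, and it does not even apply to $\sigma w-w$, whose two monomials have the same commutative image. What you actually use is that each of the four components of $\mathbb M$ is one-dimensional. So the substitution above is the most general admissible one, and agreement of values under it is exactly the definition of a $*$-identity.
\item ``Coefficient $1$'' should read ``coefficient $\pm1$, or $0$ when two degree-one variables of the same type become adjacent'', as your own list of reductions shows.
\end{enumerate}
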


\section{Central $*$-Polynomials}
	
	In this section, we aim to present the generators of the $T_2^*$-space $Id^{z,*}(\mathbb M)$. Let $W$ be the $T_2^*$-space generated by $y_{1, 0}$ and by the polynomials in $I=Id^*(\mathbb M)$.
	In order to state the result, we shall present two technical lemmas.
	
	\begin{lemma}\label{lem:aux}
		The set $W$ is closed under products and contains the polynomials $z_{1, 0}z_{2, 0}$ and $y_{1,1}\circ z_{1, 1}$. Moreover, for sets $U, V\subseteq \{1, \ldots, n\}$ with $|U|=|V|>0$, $W$ also contains the polynomials
		$$(Y|Z)_{U, V}+(Z|Y)_{V, U}\quad\text{and}\quad z_{1, 0}\left( (Y| Z)_{U, V}-(Z|Y)_{V, U}\right).$$
		Here, $(Y| Z)_{U, V}$ and $(Z|Y)_{U, V}$ are as in Definition~\ref{def:basic}.
	\end{lemma}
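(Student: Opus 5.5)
The plan is to first identify $W$ concretely and then obtain every listed polynomial as a product of elements already known to lie in $W$, reducing modulo $I$ with Theorem~\ref{thm:main}.

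\emph{Description of $W$ and closure under products.} By definition, $W$ is spanned by $I$ together with all substitutions $y_{1,0}\mapsto g$, where $g\in(\mathcal F^{(0)})^+$. Hence $W=I+(\mathcal F^{(0)})^+$. Since $I$ is an ideal, it suffices to show that $ab\in W$ for $a,b\in(\mathcal F^{(0)})^+$. Both factors are even, so $(ab)^*=b^*a^*=ba$, and therefore
$$ab=\tfrac12(ab+ba)+\tfrac12[a,b].$$
The first summand is symmetric of degree $0$. The second lies in $I$, being the image of the generator $[y_{1,0},y_{2,0}]$ of Theorem~\ref{thm:main}(1) under $y_{1,0}\mapsto a$, $y_{2,0}\mapsto b$. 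So $ab\in W$.

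\emph{The two quadratic polynomials.} Write $z_{1,0}z_{2,0}=\tfrac12\, z_{1,0}\circ z_{2,0}+\tfrac12[z_{1,0},z_{2,0}]$. The Jordan product is symmetric of degree $0$, because $(z_{1,0}z_{2,0})^*=z_{2,0}z_{1,0}$. The commutator is generator (4) of Theorem~\ref{thm:main}. Hence $z_{1,0}z_{2,0}\in W$. Substituting even skew polynomials for $z_{1,0},z_{2,0}$ then gives $uv\in W$ for all $u,v\in(\mathcal F^{(0)})^-$. Finally, $y_{1,1}\circ z_{1,1}\in(\mathcal F^{(0)})^+$ by Remark~\ref{consequences}(1), so it lies in $W$.

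\emph{The alternating words.} Let $j=|U|=|V|$ and set $w_i=y_{u_i,1}\circ z_{v_i,1}\in W$. By closure under products, $w_1\cdots w_j\in W$. Expanding the product, any term in which a factor $y_{u_i,1}z_{v_i,1}$ is followed by $z_{v_{i+1},1}y_{u_{i+1},1}$ contains $z_{v_i,1}z_{v_{i+1},1}$, and so lies in $I$ by generator (3). Symmetrically, the reverse pattern contains two adjacent $y$'s and lies in $I$ by generator (2). Only the two fully alternating terms survive, namely $(Y|Z)_{U,V}$ and $(Z|Y)_{V,U}$. Hence $(Y|Z)_{U,V}+(Z|Y)_{V,U}\in W$.

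For the second polynomial, replace the last factor by $c=[y_{u_j,1},z_{v_j,1}]$ and put $P=w_1\cdots w_{j-1}$, with $P=1$ when $j=1$. The same reduction gives
$$Pc\equiv (Y|Z)_{U,V}-(Z|Y)_{V,U}\pmod I.$$
By Remark~\ref{consequences}(2), $c\in(\mathcal F^{(0)})^-$, so $z_{1,0}c\in W$ by the quadratic step. We now move $z_{1,0}$ past $P$: since $P$ is symmetric of degree $0$ modulo $I$, substituting $y_{1,0}\mapsto P$ in generator (1) shows $z_{1,0}P\equiv Pz_{1,0}\pmod I$. Therefore
$$z_{1,0}Pc\equiv P\,(z_{1,0}c)\pmod I,$$
and the right-hand side is a product of elements of $W$, hence lies in $W$.

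\emph{Expected obstacle.} The main point needing care is this last commutation. Generator (1) has $y_{1,0}$ symmetric, and the substitution may only be made for an honestly symmetric even polynomial. So I will replace $P$ by its symmetric representative modulo $I$, which exists because $P\in W=I+(\mathcal F^{(0)})^+$, before applying (1). The bookkeeping of index order in the surviving alternating terms is routine.
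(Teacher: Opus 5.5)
Your proof is correct and is essentially the paper's argument: closure under products and the quadratic elements come from generators (1) and (4) by symmetrizing modulo $I$, and the alternating words come from expanding products of Jordan products $y_{u_l,1}\circ z_{v_l,1}$ modulo generators (2) and (3). The one difference is in the last claim. The paper puts the commutator in the first slot, $z_{1,0}[y_{u_1,1},z_{v_1,1}]\prod_{l=2}^{k}(y_{u_l,1}\circ z_{v_l,1})$, which is already a product of two elements of $W$. With that ordering, your step of commuting $z_{1,0}$ past $P$ via a symmetric representative, though valid as written, is unnecessary.
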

	
	\begin{proof}
		Recall that $W$ is the $T_2^*$-space generated by $I$ and $y_{1, 0}$. In particular, in order to prove that $W$ is closed under products, it suffices to prove a finite product of variables $y_{i, 0}$ belongs to $W$. However, the latter follows directly by the fact that any such product is again symmetric and of homogeneous degree zero $\pmod I$.

		Now we observe that, by Remark \ref{consequences}, $y_{1, 1}\circ z_{1, 1}\in (\mathcal{F}^{(0)})^+$ and $z_{1, 0}z_{2, 0}$ is symmetric and of homogeneous degree zero $\pmod{I}$, hence they are in $W$. Since $W$ is closed under products, if $1\le u_1<\cdots<u_k\le n$ and $1\le v_1<\cdots <v_k\le n$ are the increasing sequences of the elements in $U$ and $V$, we obtain 
		$$P=\prod_{l=1}^k(y_{u_l, 1}\circ z_{v_l, 1})\in W.$$
		However, since $z_{1, 1}z_{2, 1}, y_{1, 1}y_{2, 1}\in I$, a direct calculation yields 
		$$P\equiv (Y|Z)_{U, V}+(Z|Y)_{V, U}\pmod I.$$
		Therefore, $(Y|Z)_{U, V}+(Z|Y)_{V, U}\in W$. 
		Again, by Remark \ref{consequences}, $[y_{1, 1},z_{1, 1}]\in (\mathcal{F}^{(0)})^-$. Since $z_{1, 0}z_{2, 0}\in W$, we obtain 
		$z_{1, 0}[y_{1, 1},z_{1, 1}]\in W$.  Similarly to the previous case, we obtain 
		$$Q=z_{1, 0}[y_{u_1, 1},z_{v_1, 1}]\prod_{l=2}^k(y_{u_l, 1}\circ z_{v_l, 1})\in W,$$
		and $Q\equiv z_{1, 0}((Y|Z)_{U, V}-(Z|Y)_{V, U})\pmod I$. In conclusion, $z_{1, 0}((Y|Z)_{U, V}-(Z|Y)_{V, U})\in W$. 
	\end{proof}

    \begin{lemma}\label{lem:sum}
    Let $P, Q\in \mathcal{F}$ such that $P$ only has monomials of the form $\alpha (YZ)_{S, T}(Y|Z)_{U, V}$ and $Q$ only has monomials of the form $\alpha (YZ)_{S, T}(Z|Y)_{U, V}$. If $P^C=Q^C$, then $P+Q$ is a sum of polynomials of the form 
    $$\alpha\cdot  (YZ)_{S, T}((Y|Z)_{U, V}+(Z|Y)_{V, U}).$$
    On the other hand, if $P^C=-Q^C$, then $P+Q$ is a sum of polynomials of the form 
    $$\alpha\cdot  (YZ)_{S, T}((Y|Z)_{U, V}-(Z|Y)_{V, U}).$$    
\end{lemma}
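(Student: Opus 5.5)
The plan is to reduce the statement to a matching between the monomials of $P$ and those of $Q$ under the commutative map $M\mapsto M^C$. Every monomial of $P$ has the form $(YZ)_{S,T}(Y|Z)_{U,V}$. Here $U$ indexes the $y_{\cdot,1}$ variables and $V$ the $z_{\cdot,1}$ variables, and $|U|=|V|>0$. By Definition~\ref{def:basic} the indices within each type appear in increasing order. Hence such a monomial is uniquely determined by the quadruple $(S,T,U,V)$, equivalently by its commutative image $M^C=\prod_{s\in S}y_{s,0}\prod_{t\in T}z_{t,0}\prod_{u\in U}y_{u,1}\prod_{v\in V}z_{v,1}$.

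The same holds for the monomials of $Q$, which have the form $(YZ)_{S,T}(Z|Y)_{V,U}$. Here too $V$ indexes the $z_{\cdot,1}$ variables and $U$ the $y_{\cdot,1}$ variables. So the map $M\mapsto M^C$ is injective on the monomials of $P$ and on those of $Q$ separately. It sends $(YZ)_{S,T}(Y|Z)_{U,V}$ and $(YZ)_{S,T}(Z|Y)_{V,U}$ to the same commutative monomial. In the notation of Lemma~\ref{lem:aux}, these are exactly the two pieces of $(Y|Z)_{U,V}\pm(Z|Y)_{V,U}$. One small point to check is the convention for the subscripts of $(Z|Y)$: the paper writes $(Z|Y)_{U,V}$ in the hypothesis but $(Z|Y)_{V,U}$ in the conclusion. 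I will read both as the monomial with $z$-indices from $V$ and $y$-indices from $U$, consistent with Lemma~\ref{lem:aux}.

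Next write $P=\sum_{(S,T,U,V)}\alpha_{S,T,U,V}(YZ)_{S,T}(Y|Z)_{U,V}$ and $Q=\sum_{(S,T,U,V)}\beta_{S,T,U,V}(YZ)_{S,T}(Z|Y)_{V,U}$. Distinct quadruples give distinct commutative monomials, and corresponding monomials of $P$ and $Q$ have equal images. Therefore
$$P^C=\sum\alpha_{S,T,U,V}\,M_{S,T,U,V}\quad\text{and}\quad Q^C=\sum\beta_{S,T,U,V}\,M_{S,T,U,V},$$
where the $M_{S,T,U,V}$ are distinct monomials of the commutative free algebra, hence linearly independent. Thus $P^C=Q^C$ forces $\alpha_{S,T,U,V}=\beta_{S,T,U,V}$ for every quadruple. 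Similarly, $P^C=-Q^C$ forces $\alpha_{S,T,U,V}=-\beta_{S,T,U,V}$.

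Finally, regroup $P+Q$ as $\sum\alpha_{S,T,U,V}(YZ)_{S,T}\bigl((Y|Z)_{U,V}\pm(Z|Y)_{V,U}\bigr)$, which is the required form in each case. I expect no real obstacle. The only step needing care is the injectivity claim, namely that the ordering conventions of Definition~\ref{def:basic} make each monomial of $P$ or $Q$ determined by its variable set. This is exactly the hypothesis structure already used in Lemma~\ref{lem:com}. Nonmultilinear versions, with multisets as in Remark~\ref{rem:homo}, would go through identically.
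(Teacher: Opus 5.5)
Your proposal is correct and takes essentially the paper's approach: both pair each monomial $(YZ)_{S,T}(Y|Z)_{U,V}$ of $P$ with the monomial $(YZ)_{S,T}(Z|Y)_{V,U}$ of $Q$ that has the same commutative image, then compare coefficients. Your version spells out the coefficient comparison through the linear independence of the distinct commutative monomials indexed by $(S,T,U,V)$. Your reading of the $(Z|Y)$ subscript convention also agrees with how it is used in Lemma~\ref{lem:aux}.
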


\begin{proof}
  Let $\alpha, \beta \in F$ and consider the monomials 
$M_1=\alpha\cdot (YZ)_{S,T}(Y|Z)_{U, V}$ and $M_2=\beta\cdot  (YZ)_{S', T'}(Z|Y)_{V', U'}$. Observe that $M_1^C=M_2^C$ if and only if $\alpha=\beta$ and $S=S', T=T', U=U'$ and $V=V'$. In this case, we have $M_1+M_2=\alpha\cdot (YZ)_{S, T}((Y|Z)_{U, V}+(Z|Y)_{V, U})$. The latter combined with our assumption on $P$ and $Q$ proves te first statement. The second statement follows similarly by looking at the equality $M_1^C=-M_2^C$.
\end{proof}

We are now ready to present the following result.

\begin{theorem}\label{thm:main2}
    Let $F$ be a field of characteristic zero. Then  $Id^{z,*}(\mathbb M)=W$.
\end{theorem}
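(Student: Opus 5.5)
The inclusion $W\subseteq Id^{z,*}(\mathbb M)$ is immediate. Every $*$-identity is central. Any evaluation of $y_{1,0}$ lies in $(\mathbb M^{(0)})^+=F(e_{11}+e_{22})$, which is central by Remark~\ref{rem:center}. Since $Id^{z,*}(\mathbb M)$ is a $T_2^*$-subspace, it therefore contains $W$.

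For the reverse inclusion, take $f\in Id^{z,*}(\mathbb M)$. Since $\operatorname{char}F=0$ and $W$ is a $T_2^*$-subspace containing $I$, we may assume $f$ is multilinear of degree $n$. Indeed, multihomogeneous components of a central polynomial are central, and the usual linearization and restitution process stays inside $T_2^*$-subspaces. Reducing modulo $I$ exactly as in the proof of Theorem~\ref{thm:main}, we write $f\equiv P_0+P_1+P_2+P_3+P_4 \pmod I$, with each $P_k=P_{k,+}+P_{k,-}$ split according to the parity of the number of $z_{i,0}$'s.

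Now evaluate at $y_{i,0}=a_i(e_{11}+e_{22})$, $z_{i,0}=b_i(e_{11}-e_{22})$, $y_{i,1}=c_ie_{21}$, $z_{i,1}=d_ie_{12}$. The result must be a scalar matrix. Reading off coefficients as in the proof of Theorem~\ref{thm:main}, the $e_{21}$ and $e_{12}$ coefficients force $P_3=P_4=0$ via Lemma~\ref{lem:com}. The term $P_0$ contributes $(Q_{0,+}+Q_{0,-})e_{11}+(Q_{0,+}-Q_{0,-})e_{22}$, $P_1$ contributes to $e_{22}$ and $P_2$ to $e_{11}$, and these components live in different multidegrees. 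Equating the $e_{11}$ and $e_{22}$ coefficients gives two things. First, $Q_{0,-}=0$ (taking the part with $n_3=n_4=0$ separately), hence $P_{0,-}=0$. Second, $Q_{1,+}-Q_{1,-}=Q_{2,+}+Q_{2,-}$ as polynomial functions in the same multidegree. By Lemma~\ref{lem:com} this becomes an identity of commutative images: $P_{1,+}^C=P_{2,+}^C$ and $P_{1,-}^C=-P_{2,-}^C$, using the parity splitting to separate even and odd parts, whose commutative monomials cannot cancel.

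By Lemma~\ref{lem:sum}, $P_{1,+}+P_{2,+}$ is then a sum of terms $\alpha (YZ)_{S,T}((Y|Z)_{U,V}+(Z|Y)_{V,U})$ with $|T|$ even, and $P_{1,-}+P_{2,-}$ is a sum of terms $\alpha (YZ)_{S,T}((Y|Z)_{U,V}-(Z|Y)_{V,U})$ with $|T|$ odd. The remaining $P_{0,+}$ is a sum of $(YZ)_{S,T}$ with $|T|$ even.

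It remains to show each such piece lies in $W$, using Lemma~\ref{lem:aux} and the fact that $W$ is closed under products:
\begin{itemize}
\item The factor $(YZ)_{S,T}$ with $|T|$ even is a product of $y_{i,0}$'s and pairs $z_{i,0}z_{j,0}$, so it lies in $W$.
\item Hence $(YZ)_{S,T}((Y|Z)_{U,V}+(Z|Y)_{V,U})\in W$ when $|T|$ is even.
\item When $|T|$ is odd, we rewrite $(YZ)_{S,T}$ modulo $I$ as $(\text{an element of }W)\cdot z_{t,0}$, using $[z_{1,0},z_{2,0}]\in I$ and the centrality of the $y_{i,0}$ modulo $I$. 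Then $z_{t,0}((Y|Z)_{U,V}-(Z|Y)_{V,U})\in W$ by Lemma~\ref{lem:aux}, and the product lies in $W$.
\end{itemize}

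The main obstacle I expect is the bookkeeping in the matching step. One has to check carefully that Lemma~\ref{lem:com} applies to the combined polynomials $P_{1,\pm}\mp P_{2,\pm}$. In the commutative image, a monomial from $P_1$ and one from $P_2$ can coincide; this is exactly the collision Lemma~\ref{lem:sum} is designed to handle. So the comparison must be done on commutative images rather than via Lemma~\ref{lem:com} directly. One also has to handle the degenerate cases, e.g. a multidegree with $n_3=n_4=0$ where only $P_0$ appears.
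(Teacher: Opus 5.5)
Your proposal is correct and takes essentially the same route as the paper. You reduce modulo $I$ to the normal form $P_0+\cdots+P_4$, evaluate on the symmetric and skew homogeneous basis matrices to kill $P_{0,-}$, $P_3$ and $P_4$, pair $P_{1,\pm}$ with $P_{2,\pm}$ via Lemma~\ref{lem:sum}, and finish with Lemma~\ref{lem:aux} and closure of $W$ under products. The only slip is a citation. The passage from the functional equality $Q_{1,+}-Q_{1,-}=Q_{2,+}+Q_{2,-}$ to $P_{1,+}^C-P_{1,-}^C=P_{2,+}^C+P_{2,-}^C$ is Lemma~\ref{lem:vanish}, not Lemma~\ref{lem:com}, as you yourself note at the end; this is exactly how the paper argues.
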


\begin{proof}
Let $P$ be a central polynomial. Since $F$ has characteristic zero, we may assume that $P$ is multilinear of degree $n$. Moreover, observe that any polynomial $Q$ with $P\equiv Q\pmod I$ is also central. In particular, we can suppose that $P=P_0+P_1+P_2+P_3+P_4$, where the monomials appearing in each $P_i$ are restricted as in the proof of Theorem~\ref{thm:main}. We can also take the decomposition $P_i=P_{i, +}+P_{i, -}$ as in the proof of Theorem~\ref{thm:main}.

 Take $y_{i, 0}=a_i(e_{11}+e_{22}), z_{i, 0}=b_i(e_{11}-e_{22})$ and $y_{i, 1}=z_{i, 1}=0$ for each $1\le i\le n$. We obtain
$$Q=(Q_{0, +}+Q_{0, -})e_{1 1}+(Q_{0, +}-Q_{0, -})e_{2 2},$$
where $Q_{0, \pm}\in F$ is the image of $P_{0, \pm}^C$ at the elements $a_i, b_i$. Since $Q$ is central, Remark~\ref{rem:center} provides $Q_{0, +}+Q_{0, -}=Q_{0, +}-Q_{0, -}$ and so $Q_{0, -}=0$. From Lemma~\ref{lem:com}, $P_{0, -}$ equals zero. In other words, $P_{0}=P_{0, +}$ only  contains monomials with an even number of variables $z_{i, 0}$. The latter combined with Lemma~\ref{lem:aux} entails that $P_0\in W$. In particular, we can assume without loss of generality that $P_0=0$. 

Now take $y_{i, 0}=a_i(e_{11}+e_{22}), z_{i, 0}=b_i(e_{11}-e_{22}), y_{i, 1}=c_ie_{21}$ and $z_{i, 1}=d_ie_{12}$ for each $1\le i\le n$.
Similarly to the previous computation, we obtain 
$$R=(Q_{1, +}-Q_{1, -})e_{22}+(Q_{2, +}+Q_{2, -})e_{11}+(Q_{3, +}-Q_{3, -})e_{21}+(Q_{4, +}+Q_{4, -}) e_{12},$$
where $Q_{k, \pm}\in F$ is the image of $P_{k, \pm}^C$ at the elements $a_i, b_i, c_i, d_i$. Since  $R$ is central, we obtain $Q_{3, +}-Q_{3, -}=0=Q_{4, +}+Q_{4, -}$. As in the proof of Theorem~\ref{thm:main}, we conclude that $P_3=P_4=0$. Therefore, 
\begin{equation}\label{eq:part}
    P=P_{1, +}+P_{1, -}+P_{2, +}+P_{2, -}.
\end{equation}

Using again the fact that $R$ is central, we also obtain $Q_{1, +}-Q_{1, -}=Q_{2, +}+Q_{2, -}$. From Lemma~\ref{lem:vanish}, we conclude that 
$$P_{1, +}^C-P_{1, -}^C=P_{2, +}^C+P_{2, -}^C.$$
Comparing the parity of the number of variables $z_{i, 0}$ in the last equality we obtain $P_{1, +}^C=P_{2, +}^C$ and $P_{1, -}^C=-P_{2, -}^C$. From Lemma~\ref{lem:sum}, the equality $P_{1, +}^C=P_{2, +}^C$ entails that $P_{1, +}+P_{2, +}$ is a sum of polynomials of the form 
$$\alpha y_{i_1, 0}\cdots y_{i_k, 0}z_{j_{1}, 0}\cdots z_{j_{l}, 0}((Y|Z)_{U, V}+(Z|Y)_{V, U}).$$
Moreover, from construction, $l$ is even. From Lemma~\ref{lem:aux} it is direct to verify that each such polynomial is in $W$, hence $P_{1, +}+P_{2, +}\in W$. 

Again, using Lemma~\ref{lem:sum}, the equality $P_{1, -}^C=-P_{2, -}^C$ entails that $P_{1, -}+P_{2, -}$ is a sum of polynomials of the form 
$$\alpha y_{i_1, 0}\cdots y_{i_k, 0}z_{j_{1}, 0}\cdots z_{j_{l}, 0}((Y|Z)_{U, V}-(Z|Y)_{V, U}).$$
Moreover, from construction, $l$ is odd. From Lemma~\ref{lem:aux} it is direct to verify that each such polynomial is in $W$, hence $P_{1, -}+P_{2, -}\in W$. From Eq.~\eqref{eq:part}, we conclude that $P\in W$.
\end{proof}

\begin{remark}
    Following Remark~\ref{rem:homo} and the proof of Theorem~\ref{thm:main2}, we also conclude that Theorem~\ref{thm:main2}  holds for any infinite field of characteristic $\ne 2$. 
\end{remark}

\subsection*{Declaration of competing interest}

The authors declare that they have no known competing financial interests or personal relationships that could have appeared to influence the work reported in this paper.

%\section*{Data availability statement}No data are associated with this article.


\begin{thebibliography}{999}

%\bibitem{Plamen} J. Colombo and P.~E. Koshlukov, Central polynomials in the matrix algebra of order two, Linear Algebra Appl. {\bf 377} (2004), 53--67; MR2021602

%\bibitem{Plamen1} J. Colombo and P.~E. Koshlukov, Identities with involution for the matrix algebra of order two in characteristic $p$, Israel J. Math. {\bf 146} (2005), 337--355; MR2151607

\bibitem{Bahturin} Y. Bahturin, M.~V. Tvalavadze and T.~V. Tvalavadze, Group gradings on superinvolution simple superalgebras, Linear Algebra Appl. {\bf 431} (2009), no.~5-7, 1054--1069; MR2535573

\bibitem{Plamen3} A.~P. Brand\~ao Jr., P.~E. Koshlukov and A.~N. Krasilnikov, Graded central polynomials for the matrix algebra of order two, Monatsh. Math. {\bf 157} (2009), no.~3, 247--256; MR2520727

\bibitem{Plamen2}  A.~P. Brand\~ao Jr. and P.~E. Koshlukov, Central polynomials for ${\Bbb Z}_2$-graded algebras and for algebras with involution, J. Pure Appl. Algebra {\bf 208} (2007), no.~3, 877--886; MR2283432

\bibitem{Ana} J.~P. Cruz and A.~C. Vieira, Central polynomials of the second-order matrix algebra with graded involution, J. Algebra {\bf 639} (2024), 574--595; MR4667607

%\bibitem{Diogo} D. Diniz, E. P. da Fonsêca and L. F. Ramos, Graded polynomial identities for all group gradings on $M_2(F)$. (2026, preprint).

\bibitem{Giam1} V.~S. Drensky and A. Giambruno, Cocharacters, codimensions and Hilbert series of the polynomial identities for $2\times 2$ matrices with involution, Canad. J. Math. {\bf 46} (1994), no.~4, 718--733; MR1289056

\bibitem{Drensky1} V.~S. Drensky, A minimal basis for identities of a second-order matrix algebra over a field of characteristic $0$, Algebra i Logika {\bf 20} (1981), no.~3, 282--290, 361; MR0648317

\bibitem{livro} V.~S. Drensky, {\it Free algebras and PI-algebras}, Springer, Singapore, 2000; MR1712064

\bibitem{Formanek} E.~W. Formanek, P. Halpin and W.-C.~W. Li, The Poincar\'e{} series of the ring of $2\times 2$\ generic matrices, J. Algebra {\bf 69} (1981), no.~1, 105--112; MR0613860

\bibitem{Shest} C. G\'omez-Ambrosi, J.~A. Laliena~Clemente and I.~P. Shestakov, On the Lie structure of the skew elements of a prime superalgebra with superinvolution, Comm. Algebra {\bf 28} (2000), no.~7, 3277--3291; MR1765316

\bibitem{Ninni} A. Ioppolo, The exponent for superalgebras with superinvolution, Linear Algebra Appl. {\bf 555} (2018), 1--20; MR3834188

\bibitem{Lev} D.~V. Levchenko, Finite basis property of identities with involution of a second-order matrix algebra, Serdica {\bf 8} (1982), no.~1, 42--56; MR0680769

\bibitem{Racine} M.~L. Racine, Primitive superalgebras with superinvolution, J. Algebra {\bf 206} (1998), no.~2, 588--614; MR1637088

\bibitem{Regev} A. Regev, Existence of identities in $A\otimes B$, Israel J. Math. {\bf 11} (1972), 131--152; MR0314893

\bibitem{Regev2} A. Regev, Codimensions and trace codimensions of matrices are asymptotically equal, Israel J. Math. {\bf 47} (1984), no.~2-3, 246--250; MR0738172

\bibitem{Okt} S.~V. Okhitin, Moscow Univ. Math. Bull. {\bf 43} (1988), no.~4, 49--51; translated from Vestnik Moskov. Univ. Ser. I Mat. Mekh. {\bf 1988}, no.~4, 61--63; MR0972718

\end{thebibliography}
\end{document}